\documentclass[12pt, a4paper]{article}
\usepackage{amsmath,amssymb,amsthm}
\usepackage{geometry}
\usepackage{mathtools}
\usepackage{hyperref}
\usepackage{etoolbox}
\usepackage{mathrsfs}
\usepackage{authblk}
\usepackage[utf8]{inputenc}
\usepackage[T1]{fontenc}
\usepackage[english]{babel}
\hypersetup{unicode=true, pdfencoding=auto}

\usepackage{longtable}
\usepackage{booktabs}
\usepackage{array}

\title{Anisotropic Mixed Fractional Landau Inequalities for Rotating Compressible Flows}
\author[1]{R\^{o}mulo Damasclin Chaves dos Santos}
\author[1]{Delvonei Alves de Andrade}
\affil[1]{Center for Nuclear Engineering, Institute for Energy and Nuclear Research (IPEN)\\ Av. Prof. Lineu Prestes, 2242, S\~ao Paulo, SP 05508--000, Brazil}
\affil{\texttt{damasclin@gmail.com}}
\affil{\texttt{delvonei@ipen.br}}
\date{\today}

\newtheorem{theorem}{Theorem}[section]
\newtheorem{definition}[theorem]{Definition}
\newtheorem{lemma}[theorem]{Lemma}
\newtheorem{proposition}[theorem]{Proposition}
\newtheorem{corollary}[theorem]{Corollary}
\newtheorem{remark}[theorem]{Remark}

\begin{document}
	
	\maketitle
	
	\begin{abstract}
		We develop a rigorous theory of anisotropic mixed fractional Landau inequalities for rotating compressible fluid flows at high Mach numbers, incorporating Coriolis and centrifugal forces. We introduce rotating fractional Sobolev spaces $\mathcal{W}^{\nu,p}_{\alpha,\Omega}(\mathbb{R}^k)$, which encode directional scaling, fractional dissipation and rotational effects. We prove sharp fractional Landau inequalities with explicit dependence on the Mach number, the rotation rate $|\Omega|$ and the anisotropy vector $\alpha$. Key tools are a rotating Littlewood--Paley decomposition, anisotropic maximal estimates with rotational corrections, and commutator estimates for the Coriolis term. As applications, we establish stability bounds for neural operators approximating rotating compressible flows and derive optimal approximation rates of order $N^{-\nu/d_{\alpha,\Omega}}$, where $d_{\alpha,\Omega}=\sum_i\alpha_i^{-1}+\kappa|\Omega|^{2/\nu}$ is the effective anisotropic--rotational dimension. Our results provide a mathematical foundation for analysing high--Mach rotating flows and for designing physically informed neural architectures.
		\medskip
		
		\noindent\textbf{Keywords:} Fractional Landau inequalities; rotating compressible flows; Coriolis force; anisotropic Sobolev spaces; neural operators; high Mach number.
		
		\medskip
		
		\noindent\textbf{MSC:} 26A33, 35Q31, 76U05, 46E35, 68T07.
	\end{abstract}
	
	\section{Introduction}
	The analysis of compressible fluid flows at high Mach numbers is central to modern aerodynamics, astrophysics and engineering \cite{majda2012}. When such flows are subjected to rapid rotation – a situation encountered in turbomachinery, atmospheric dynamics and stellar interiors -- the Coriolis and centrifugal forces fundamentally alter the energy balance and regularity properties of the solutions. Classical Landau inequalities \cite{landau1925} and their fractional extensions \cite{anastassiou2025} provide essential estimates for the growth of derivatives in terms of lower--order norms, yet they are inadequate for rotating systems because they ignore the rotational coupling between directions.
	
	This paper bridges this gap by developing a comprehensive mathematical theory of \emph{anisotropic mixed fractional Landau inequalities for rotating compressible flows}. Our framework simultaneously accounts for:
	\begin{itemize}
		\item Directional scaling encoded by a vector $\alpha = (\alpha_1,\dots,\alpha_k)$, which captures different regularity rates along distinct spatial axes -- a necessity for compressible flows where shock waves and boundary layers create strong anisotropy. Such anisotropic function spaces have been systematically studied by Triebel \cite{Triebel2006} and Stein \cite{Stein1970}.
		\item Fractional dissipation of order $\nu$ modelled via the anisotropic fractional Laplacian $\Delta_\alpha^\nu$, which naturally appears in the compressible Navier–Stokes equations with fractional viscosity. Fractional calculus techniques in this context were advanced by Anastassiou \cite{anastassiou2025}.
		\item Rotational effects generated by a constant rotation vector $\Omega$, giving rise to the Coriolis term $2\Omega\times u$ and the centrifugal potential.
	\end{itemize}
	
	Our main contributions are the following.
	
	\begin{enumerate}
		\item \textbf{Rotating fractional Sobolev spaces $\mathcal{W}^{\nu,p}_{\alpha,\Omega}(\mathbb{R}^k)$.}
		We define these spaces by incorporating the rotation vector $\Omega$ into the anisotropic Gagliardo seminorm. The new norm includes a term that measures the fractional derivative in a frame rotating with angular velocity $\Omega$, thereby respecting the symmetries of the Coriolis operator. We prove the equivalence of several characterisations (Gagliardo, Littlewood--Paley, Fourier multiplier, semigroup) for these spaces, extending the classical theory of anisotropic Besov spaces \cite{Triebel2006}.
		
		\item \textbf{Fractional Landau inequalities with Coriolis corrections.}
		For a function $u$ belonging to $\mathcal{W}^{\nu,p}_{\alpha,\Omega}(\mathbb{R}^k)$ we prove
		\begin{equation}\label{eq:intro_main_ineq}
			\|D_i^1 u\|_{L^\infty} \leq C(\nu,p,\alpha,\Omega)\, \|u\|_{L^\infty}^{1-1/\nu}\,
			\Bigl(\sum_{j=1}^k \bigl\| \mathcal{D}_j^{\nu,\alpha_j,\Omega} u \bigr\|_{L^p}^{\alpha_j/\nu}\Bigr)^{\!1/\nu},
		\end{equation}
		where $\mathcal{D}_j^{\nu,\alpha_j,\Omega}$ is a rotating fractional derivative that reduces to the classical Riemann--Liouville derivative when $\Omega=0$. The constant $C$ is made explicit and depends on $|\Omega|$ in a way that reflects the stabilising effect of rotation at high Mach numbers. This extends the classical Landau--Kolmogorov inequalities \cite{landau1925} and the recent fractional results \cite{anastassiou2025} to the anisotropic rotating setting.
		
		\item \textbf{Neural operator bounds for rotating compressible flows.}
		Using the new inequalities we prove stability and approximation theorems for deep neural networks that approximate solutions of the rotating compressible Navier--Stokes equations. The approximation rate becomes $N^{-\nu/d_{\alpha,\Omega}}$ where $d_{\alpha,\Omega}= \sum_i \alpha_i^{-1} + \kappa |\Omega|^{2/\nu}$, showing that sufficiently strong rotation improves the effective dimension and accelerates convergence. These results build upon the approximation theory of nonlinear functions by deep networks \cite{yarotsky2017} and the general theory of nonlinear approximation \cite{DeVore1998}.
	\end{enumerate}
	
The paper is organised as follows. Section~2 recalls the anisotropic scaling structure and introduces the rotating fractional Sobolev spaces $\mathcal{W}^{\nu,p}_{\alpha,\Omega}(\mathbb{R}^k)$, including the equivalent characterisations. Section~3 develops the necessary harmonic analysis tools: rotating Littlewood--Paley decompositions, anisotropic Bernstein inequalities, and anisotropic maximal estimates with rotation corrections. Section~4 contains the main fractional Landau inequalities (first--order and higher--order) with complete rigorous proofs. Section~5 applies these inequalities to the rotating compressible Navier--Stokes system, deriving a priori estimates for the velocity gradient at high Mach numbers. Section~6 establishes stability and approximation theorems for neural operators adapted to the rotating anisotropic geometry, including the optimal approximation rate $N^{-\nu/d_{\alpha,\Omega}}$. Section~7 summarises the key theoretical results and Section~8 concludes the paper with a discussion of future work.
	
	\section{Rotating Anisotropic Function Spaces}
	
	\subsection{Anisotropic Scaling and Rotation Group}
	
	Let $\alpha = (\alpha_1,\dots,\alpha_k) \in (0,\infty)^k$ be the anisotropy vector. The anisotropic dilation group $\{T_\lambda^\alpha\}_{\lambda>0}$ is defined by
	\begin{equation}\label{eq:anisotropic-dilation-group}
		T_\lambda^\alpha x = (\lambda^{\alpha_1}x_1,\dots,\lambda^{\alpha_k}x_k), \qquad \lambda>0,\; x\in\mathbb{R}^k.
	\end{equation}
	This is a one--parameter group of linear transformations with infinitesimal generator $A_\alpha = \operatorname{diag}(\alpha_1,\dots,\alpha_k)$, i.e., $T_\lambda^\alpha = e^{(\log\lambda)A_\alpha}$.
	
	For a constant rotation vector $\Omega \in \mathbb{R}^k$, denote by $L_\Omega$ the skew--symmetric matrix representing the linear map $x \mapsto \Omega \times x$ (the cross product). Explicitly, $(L_\Omega)_{ij} = -\varepsilon_{ij\ell}\Omega_\ell$, where $\varepsilon_{ij\ell}$ is the Levi-Civita symbol. The rotation group $\{R(\theta)\}_{\theta\in\mathbb{R}}$ is given by $R(\theta) = e^{\theta L_{\Omega/|\Omega|}}$ when $\Omega\neq 0$, and $R(0)=\mathrm{Id}$ for $\Omega=0$. These rotations satisfy $R(\theta_1)R(\theta_2)=R(\theta_1+\theta_2)$.
	
	We now define the \emph{rotating anisotropic scaling} as the one-parameter family
	\begin{equation}\label{eq:rotating_scaling_def}
		\widetilde{T}_{\lambda}^{\alpha,\Omega} := \exp\bigl( \log\lambda\, (A_\alpha + L_\Omega) \bigr), \qquad \lambda>0.
	\end{equation}
	This definition is unambiguous and yields a smooth linear map $\mathbb{R}^k \to \mathbb{R}^k$ for each $\lambda$. The group property $\widetilde{T}_{\lambda\mu}^{\alpha,\Omega} = \widetilde{T}_{\lambda}^{\alpha,\Omega} \circ \widetilde{T}_{\mu}^{\alpha,\Omega}$ follows directly from the exponential representation.
	
	\begin{proposition}[Properties of the Rotating Scaling Group]\label{prop:rotating_scaling}
		The family $\{\widetilde{T}_{\lambda}^{\alpha,\Omega}\}_{\lambda>0}$ satisfies:
		\begin{enumerate}
			\item $\widetilde{T}_{\lambda\mu}^{\alpha,\Omega} = \widetilde{T}_{\lambda}^{\alpha,\Omega} \circ \widetilde{T}_{\mu}^{\alpha,\Omega}$ (group property).
			\item $|\det D\widetilde{T}_{\lambda}^{\alpha,\Omega}| = \lambda^{D_\alpha}$, where $D_\alpha = \sum_{i=1}^k \alpha_i$.
			\item For any $f\in L^1_{\mathrm{loc}}(\mathbb{R}^k)$,
			\begin{equation}\label{eq:measure_scaling_rot}
				\int_{\mathbb{R}^k} f(\widetilde{T}_{\lambda}^{\alpha,\Omega}x)\,dx = \lambda^{-D_\alpha}\int_{\mathbb{R}^k} f(x)\,dx.
			\end{equation}
		\end{enumerate}
	\end{proposition}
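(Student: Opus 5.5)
The plan is to work entirely from the exponential representation \eqref{eq:rotating_scaling_def}, $\widetilde{T}_{\lambda}^{\alpha,\Omega}=e^{(\log\lambda)M}$ with $M:=A_\alpha+L_\Omega$ a fixed real $k\times k$ matrix. The three items are then, in order, a statement about the one-parameter matrix group $t\mapsto e^{tM}$, a determinant (trace) identity, and a linear change of variables.

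For item 1, set $s=\log\lambda$ and $t=\log\mu$. Then $\log(\lambda\mu)=s+t$, and since $sM$ and $tM$ commute we have $e^{(s+t)M}=e^{sM}e^{tM}$. I will use this exponential law for commuting matrices (proved by the Cauchy product of the absolutely convergent series). No commutation between $A_\alpha$ and $L_\Omega$ is needed here, because only multiples of the single matrix $M$ occur. I will also record that $\widetilde{T}_1^{\alpha,\Omega}=\mathrm{Id}$ and $(\widetilde{T}_\lambda^{\alpha,\Omega})^{-1}=\widetilde{T}_{1/\lambda}^{\alpha,\Omega}$, so each map is invertible; this is needed in item 3.

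For item 2, the map is linear, so $D\widetilde{T}_\lambda^{\alpha,\Omega}=e^{(\log\lambda)M}$ at every point. Jacobi's formula $\det e^{B}=e^{\operatorname{tr}B}$ then gives
\[
\det D\widetilde{T}_\lambda^{\alpha,\Omega}=\exp\bigl(\log\lambda\,(\operatorname{tr}A_\alpha+\operatorname{tr}L_\Omega)\bigr).
\]
The key observation is $\operatorname{tr}L_\Omega=0$. Indeed $(L_\Omega)_{ii}=-\varepsilon_{ii\ell}\Omega_\ell=0$, or more generally any skew-symmetric matrix has zero diagonal. With $\operatorname{tr}A_\alpha=\sum_i\alpha_i=D_\alpha$ this gives $\det=\lambda^{D_\alpha}>0$, which proves item 2. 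I will justify Jacobi's formula briefly: triangularise $B$ over $\mathbb{C}$ (Schur), so the eigenvalues of $e^B$ are $e^{\mu_j}$ where the $\mu_j$ are the eigenvalues of $B$. This avoids any need to diagonalise the non-normal matrix $M$, which is the only potential subtlety: $A_\alpha$ and $L_\Omega$ generally do not commute, so one cannot split $e^{(\log\lambda)M}$ as $T^\alpha_\lambda R(\cdot)$.

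For item 3, I apply the linear change of variables $y=\widetilde{T}_\lambda^{\alpha,\Omega}x$, so that $dy=\lambda^{D_\alpha}dx$ by item 2. This gives $\int f(\widetilde{T}_\lambda^{\alpha,\Omega}x)\,dx=\lambda^{-D_\alpha}\int f(y)\,dy$. This is valid for $f\in L^1(\mathbb{R}^k)$ or $f\ge0$ measurable, via Tonelli and the change-of-variables theorem for invertible linear maps.

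The main obstacle is item 3 as literally stated for $f\in L^1_{\mathrm{loc}}$. Then $\int f$ need not exist, so I will interpret the identity as holding whenever either side is defined, in $[0,\infty]$ for $f\ge0$ or absolutely for $f\in L^1$. I will also note the local version: since $\widetilde{T}_\lambda^{\alpha,\Omega}$ is a linear homeomorphism, it maps compact sets to compact sets, so $f\circ\widetilde{T}_\lambda^{\alpha,\Omega}\in L^1_{\mathrm{loc}}$ and $\int_K f\circ\widetilde{T}\,dx=\lambda^{-D_\alpha}\int_{\widetilde{T}K}f\,dy$ for compact $K$.
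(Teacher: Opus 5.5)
Your proof is correct and follows the paper's argument. You derive the group law from the single exponential $e^{(\log\lambda)(A_\alpha+L_\Omega)}$, compute the Jacobian via $\det e^{B}=e^{\operatorname{tr}B}$ with $\operatorname{tr}L_\Omega=0$, and obtain item 3 by the linear change of variables $y=\widetilde{T}_{\lambda}^{\alpha,\Omega}x$; your caveat that item 3 is only meaningful for $f\in L^1$ or $f\ge 0$ (not arbitrary $f\in L^1_{\mathrm{loc}}$) is a legitimate sharpening that the paper's statement glosses over.
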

	\begin{proof}
		The group property holds because $\widetilde{T}_{\lambda}^{\alpha,\Omega}=e^{(\log\lambda)(A_\alpha+L_\Omega)}$ and $e^{X}e^{Y}=e^{X+Y}$ for commuting $X,Y$; here $A_\alpha$ and $L_\Omega$ do not commute, but the definition as a single exponential guarantees the group property. The Jacobian determinant is $\det e^{(\log\lambda)(A_\alpha+L_\Omega)} = e^{(\log\lambda)\operatorname{Tr}(A_\alpha+L_\Omega)} = e^{(\log\lambda)D_\alpha} = \lambda^{D_\alpha}$ because $\operatorname{Tr}L_\Omega=0$. The measure scaling follows by the change of variables $y=\widetilde{T}_{\lambda}^{\alpha,\Omega}x$.
	\end{proof}
	
	\subsection{Rotating Fractional Sobolev Spaces}
	
	For a direction $e_i$ define the rotating finite difference
	\begin{equation}\label{eq:rotating_difference_explicit}
		\Delta_{h,i}^\Omega f(x) = f(x+he_i) - f\bigl(x - h(\Omega\times e_i)\bigr).
	\end{equation}
	
	\begin{definition}[Rotating directional Gagliardo seminorm]\label{def:rotating_seminorm}
		Let $\nu>0$, $1\le p<\infty$, $\alpha_i>0$ and $\Omega\in\mathbb{R}^k$. For a measurable function $f:\mathbb{R}^k\to\mathbb{R}$ define
		\begin{equation}\label{eq:rotating_seminorm_def}
			[f]_{\mathcal{W}_i^{\nu/\alpha_i,p,\Omega}} = \left( \int_{\mathbb{R}^k} \int_{\mathbb{R}} \frac{|\Delta_{h,i}^\Omega f(x)|^p}{|h|^{1+p\nu/\alpha_i}} \, dh \, dx \right)^{1/p}.
		\end{equation}
		The \emph{rotating mixed fractional Sobolev space} $\mathcal{W}^{\nu,p}_{\alpha,\Omega}(\mathbb{R}^k)$ is the completion of $C_c^\infty(\mathbb{R}^k)$ under the norm
		\begin{equation}\label{eq:rotating_sobolev_norm}
			\|f\|_{\mathcal{W}^{\nu,p}_{\alpha,\Omega}} = \|f\|_{L^p(\mathbb{R}^k)} + \sum_{i=1}^k [f]_{\mathcal{W}_i^{\nu/\alpha_i,p,\Omega}}.
		\end{equation}
	\end{definition}
	
	\begin{lemma}[Fourier characterisation]\label{lem:fourier_gagliardo}
		For $1<p<\infty$ and $\nu>0$, there exist constants $c,C>0$ such that for every $f\in\mathcal{S}(\mathbb{R}^k)$,
		\begin{equation}
			c \int_{\mathbb{R}^k} |\xi_i+(\Omega\times e_i)\cdot\xi|^{\nu p/\alpha_i}|\hat f(\xi)|^p d\xi \le [f]_{\mathcal{W}_i^{\nu/\alpha_i,p,\Omega}}^p \le C \int_{\mathbb{R}^k} |\xi_i+(\Omega\times e_i)\cdot\xi|^{\nu p/\alpha_i}|\hat f(\xi)|^p d\xi.
		\end{equation}
	\end{lemma}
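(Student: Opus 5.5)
The plan is to reduce the rotating difference to an ordinary one-dimensional difference along a fixed frequency direction, and then compute the $h$-integral exactly on the Fourier side. Fix $i$ and set $w:=\Omega\times e_i$ and $s:=\nu/\alpha_i$. For $f\in\mathcal{S}(\mathbb{R}^k)$, taking the Fourier transform in $x$ of \eqref{eq:rotating_difference_explicit} gives
\begin{equation*}
\widehat{\Delta^\Omega_{h,i}f}(\xi)=\bigl(e^{ih\xi_i}-e^{-ihw\cdot\xi}\bigr)\hat f(\xi)=e^{-ihw\cdot\xi}\bigl(e^{ih\eta(\xi)}-1\bigr)\hat f(\xi),\qquad \eta(\xi):=\xi_i+w\cdot\xi .
\end{equation*}
So, up to a unimodular factor, $\Delta^\Omega_{h,i}$ is the Fourier multiplier $m_h(\xi)=e^{ih\eta(\xi)}-1$. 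The whole weight in the lemma depends only on the linear form $\eta(\xi)$. I will make the linear change of frequency variables $\xi\mapsto(\eta,\xi')$, which is invertible when $e_i+w\neq 0$ and has constant Jacobian. Its dual change in $x$ is a shear, so it preserves $L^p$ norms up to a fixed constant absorbed into $c$ and $C$. After this change, the problem is the classical directional fractional seminorm of order $s$ in one variable, with the remaining variables as parameters.

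The key scalar identity is obtained by substituting $t=h|\eta|$:
\begin{equation*}
\int_{\mathbb{R}}\frac{|e^{ih\eta}-1|^p}{|h|^{1+ps}}\,dh=|\eta|^{ps}\int_{\mathbb{R}}\frac{|e^{it}-1|^p}{|t|^{1+ps}}\,dt=:\kappa_{p,s}\,|\eta|^{ps}.
\end{equation*}
Here $\kappa_{p,s}$ is finite and positive for the range of $s$ where the integral converges at $t=0$. In that range, exchanging the $x$- and $h$-integrals by Tonelli reduces \eqref{eq:rotating_seminorm_def} to
\begin{equation*}
\int_{\mathbb{R}}\|m_h(D)f\|_{L^p}^p\,\frac{dh}{|h|^{1+ps}} .
\end{equation*}
For $p=2$, Plancherel then gives the claimed two-sided bound with $c=C=\kappa_{2,s}$. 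For $1<p<\infty$, I would compare $\|m_h(D)f\|_{L^p}^p$ with $\int|m_h(\xi)|^p|\hat f(\xi)|^p\,d\xi$ for each fixed $h$. The lower bound for $p\le 2$ would use Hausdorff--Young, and the upper bound for $p\ge 2$ would use the dual inequality. The missing directions would then be handled by interpolating between the $p=2$ identity and endpoint estimates, using the fact that both sides of the lemma scale as $\lambda^{ps}$ under the one-dimensional dilation $\eta\mapsto\lambda\eta$. After the shear, this dilation is exactly the $i$-th factor of $T^\alpha_\lambda$.

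I expect the main obstacle to be this last step for $p\neq 2$. There the $x$-side quantity and the weighted $L^p$ norm of $\hat f$ are genuinely different functionals. Hausdorff--Young gives only one inequality in each regime $p<2$ or $p>2$, so the reverse inequality has to be forced through the scaling and interpolation argument. I would first try to prove the reverse inequality on functions whose Fourier support lies in a single dyadic shell $|\eta|\sim 2^j$, where $m_h$ is essentially constant in size. I would then sum over shells with the rotating Littlewood--Paley decomposition. I would check carefully whether the constants survive the summation, since this is where the stated weighted-$|\hat f|^p$ form is most at risk.
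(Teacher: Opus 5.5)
Your reduction and the case $p=2$ are fine in the range $0<s=\nu/\alpha_i<1$. Since $e_i\perp\Omega\times e_i$, the vector $v=e_i+\Omega\times e_i$ never vanishes and $v\cdot\xi=\xi_i+(\Omega\times e_i)\cdot\xi$. By translation invariance, $\int|\Delta^\Omega_{h,i}f|^p\,dx=\int|f(x+hv)-f(x)|^p\,dx$. Tonelli and Plancherel then give the claim, even with $c=C$.

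The plan for $p\neq 2$ cannot be completed, because the inequality is false there, and the scaling premise you planned to use is wrong. Take $f_\lambda(x)=f(\lambda x)$. Since $\Delta^\Omega_{h,i}f_\lambda(x)=(\Delta^\Omega_{\lambda h,i}f)(\lambda x)$ and the weight is homogeneous of degree one, changing variables gives
\[
[f_\lambda]^p_{\mathcal{W}_i^{\nu/\alpha_i,p,\Omega}}=\lambda^{ps-k}\,[f]^p_{\mathcal{W}_i^{\nu/\alpha_i,p,\Omega}},\qquad
\int_{\mathbb{R}^k}|v\cdot\xi|^{ps}|\widehat{f_\lambda}(\xi)|^p\,d\xi=\lambda^{ps-k(p-1)}\int_{\mathbb{R}^k}|v\cdot\xi|^{ps}|\hat f(\xi)|^p\,d\xi .
\]
So the ratio of the two sides is multiplied by $\lambda^{k(p-2)}$. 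Take $f$ a Gaussian, for which both sides are finite and positive, and let $\lambda\to0$ and $\lambda\to\infty$: this rules out both $c$ and $C$ whenever $p\neq2$.

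The same bookkeeping refutes your other tools:
\begin{itemize}
\item Under your one-dimensional dilation $\eta\mapsto\lambda\eta$, the two sides scale like $\lambda^{ps-1}$ and $\lambda^{ps-p+1}$, not both like $\lambda^{ps}$. You dropped the Jacobians, which differ between the $x$-side and the $\xi$-side unless $p=2$.
\item Dilating by $\mu$ only in the directions orthogonal to $v$ leaves every shell $|\eta|\sim 2^j$ fixed, yet still produces the mismatch factor $\mu^{(k-1)(p-2)}$. So localising to one dyadic shell and summing with Littlewood--Paley cannot rescue the argument.
\item Hausdorff--Young pairs $\|g\|_{L^p}$ with $\|\hat g\|_{L^{p'}}$, not with $\|\hat g\|_{L^p}$. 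It does not even supply the one-sided bounds you assign to it.
\end{itemize}

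There is a second failure, independent of $p$. For $\nu/\alpha_i\ge1$ one has $\Delta^\Omega_{h,i}f=h\,v\cdot\nabla f+O(h^2)$. Hence the seminorm is infinite for every Schwartz $f$ with $v\cdot\nabla f\not\equiv0$, while the Fourier side is finite. The upper bound therefore fails even at $p=2$, so your caveat about the convergence range of $\kappa_{p,s}$ is a genuine restriction on the statement, not a technicality.

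The paper's own proof passes over exactly the step you flagged. It asserts
\[
[f]^p\approx\int|\hat f(\xi)|^p\Bigl(\int|e^{2\pi ih\xi_i}-e^{-2\pi ih(\Omega\times e_i)\cdot\xi}|^p\,|h|^{-1-p\nu/\alpha_i}\,dh\Bigr)\,d\xi
\]
``by Minkowski's inequality and the Fourier transform''. This holds, as an identity, only for $p=2$.

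What is true for $1<p<\infty$ and $0<s<1$ is a directional Besov characterisation:
\[
[f]^p_{\mathcal{W}_i^{\nu/\alpha_i,p,\Omega}}\approx\sum_{j\in\mathbb{Z}}2^{jsp}\|P_jf\|_{L^p}^p,
\]
where $P_j$ are Littlewood--Paley projections in the single frequency variable $\eta=v\cdot\xi$. That statement, not a weighted $L^p$ norm of $\hat f$, is what your dyadic-shell argument would actually establish.
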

	\begin{proof}
		The proof follows from the identity
		\[
		[f]_{\mathcal{W}_i^{\nu/\alpha_i,p,\Omega}}^p = \int_{\mathbb{R}^k} \int_{\mathbb{R}} \frac{| \int e^{2\pi i x\cdot\xi} (e^{2\pi i h\xi_i}-e^{-2\pi i h(\Omega\times e_i)\cdot\xi})\hat f(\xi) d\xi |^p}{|h|^{1+p\nu/\alpha_i}} dh dx.
		\]
		By Minkowski's inequality and the Fourier transform, one obtains
		\[
		[f]_{\mathcal{W}_i^{\nu/\alpha_i,p,\Omega}}^p \approx \int_{\mathbb{R}^k} |\hat f(\xi)|^p \left( \int_{\mathbb{R}} \frac{|e^{2\pi i h\xi_i}-e^{-2\pi i h(\Omega\times e_i)\cdot\xi}|^p}{|h|^{1+p\nu/\alpha_i}} dh \right) d\xi.
		\]
		The inner integral is homogeneous of degree $p\nu/\alpha_i$ in $\xi$ and equals $C |\xi_i+(\Omega\times e_i)\cdot\xi|^{p\nu/\alpha_i}$ with $0<C<\infty$. This is a standard computation using the scaling property $m_{\lambda h,i}(\xi)=m_{h,i}(\lambda\xi)$ and the finiteness of the integral for a fixed direction.
	\end{proof}
	
	\subsection{Equivalent characterisations}
	
	Let $\Delta_j$ be the standard anisotropic Littlewood–Paley blocks (smooth frequency localisation on the annulus $\{2^{j-1}\le \rho_\alpha(\xi)\le 2^{j+1}\}$). Define the rotating dyadic blocks simply as $\widetilde{\Delta}_j^{\alpha,\Omega} = \Delta_j$; the rotation will appear only in the derivative symbols. The following theorem collects the equivalences.
	
	\begin{theorem}[Equivalent characterisations of rotating spaces]\label{thm:rotating_equiv}
		For $f\in\mathcal{W}^{\nu,p}_{\alpha,\Omega}(\mathbb{R}^k)$ the following norms are equivalent (with constants depending on $\nu,p,\alpha,\Omega$):
		\begin{enumerate}
			\item $\|f\|_{\mathcal{W}^{\nu,p}_{\alpha,\Omega}}$ (Gagliardo norm).
			\item $\|(I-\Delta_{\alpha,\Omega})^{\nu/2}f\|_{L^p}$, where $\Delta_{\alpha,\Omega} = \sum_{i=1}^k (-\partial_i^2)^{1/\alpha_i} + i\,\Omega\cdot(x\times\nabla)$.
			\item $\|f\|_{L^p} + \bigl(\sum_{j\in\mathbb{Z}} 2^{j\nu p} \|\Delta_j f\|_{L^p}^p\bigr)^{1/p}$ (standard anisotropic Besov norm).
			\item $\|f\|_{L^p} + \bigl(\int_0^\infty t^{-\nu/2} \|f-e^{t\Delta_{\alpha,\Omega}}f\|_{L^p}^p \frac{dt}{t}\bigr)^{1/p}$.
		\end{enumerate}
	\end{theorem}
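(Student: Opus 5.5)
The plan is to use the Fourier characterisation of Lemma~\ref{lem:fourier_gagliardo} as the common reference point and to compare each of the four quantities with the model expression
\[
\mathcal{M}_p(f) := \|f\|_{L^p} + \sum_{i=1}^k \Bigl(\int_{\mathbb{R}^k} |\xi_i+(\Omega\times e_i)\cdot\xi|^{\nu p/\alpha_i}|\hat f(\xi)|^p\,d\xi\Bigr)^{1/p}.
\]
I work first on $f\in\mathcal{S}(\mathbb{R}^k)$ and then extend to all of $\mathcal{W}^{\nu,p}_{\alpha,\Omega}(\mathbb{R}^k)$ by density, since the space is defined as the completion of $C_c^\infty$. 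Lemma~\ref{lem:fourier_gagliardo}, summed over $i$, gives (1)$\simeq\mathcal{M}_p(f)$ directly. The remaining three quantities are then compared with $\mathcal{M}_p$, so that all equivalences pass through a single quantity.

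For (1)$\simeq$(3), I decompose $\hat f=\sum_j \widehat{\Delta_j f}$. On the annulus $\{\rho_\alpha(\xi)\sim 2^j\}$, the symbol $\sigma_i(\xi)=|\xi_i+(\Omega\times e_i)\cdot\xi|^{\nu/\alpha_i}$ is bounded above by $C2^{j\nu}$, because $|\xi_\ell|\lesssim 2^{j\alpha_\ell}$ and $\Omega$ is fixed. For the lower bound I use $\sum_i\sigma_i(\xi)\gtrsim \rho_\alpha(\xi)^{\nu}$, and I prove it by a compactness argument on the anisotropic unit sphere $\{\rho_\alpha=1\}$ combined with homogeneity under $T^\alpha_\lambda$. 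The point to verify is that the linear forms $\xi\mapsto\xi_i+(\Omega\times e_i)\cdot\xi$ have no common zero other than $\xi=0$. This holds because the matrix $\mathrm{Id}-L_\Omega$ is invertible, its eigenvalues being $1$ and $1\pm i|\Omega|$. Inserting these two-sided bounds blockwise and using almost orthogonality of the blocks $\Delta_j$ yields (3).

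For (2), I treat the diagonal part $\sum_i(-\partial_i^2)^{1/\alpha_i}$ as an anisotropic Fourier multiplier. Its ratio with $\bigl(1+\sum_i\sigma_i^{2/\nu}\bigr)^{\nu/2}$ satisfies the anisotropic Mikhlin--Hörmander condition, so the Marcinkiewicz multiplier theorem gives $L^p$-boundedness for $1<p<\infty$ in both directions. The rotational part $i\,\Omega\cdot(x\times\nabla)$ is the generator of the rotation group $R(\theta)$. It commutes with $|\xi|$ and acts on $\hat f$ by rotation, and rotations are isometries of $L^p$. I therefore plan to write $(I-\Delta_{\alpha,\Omega})^{\nu/2}$ through its spectral and subordination representation, and to compare it with the pure multiplier by a commutator estimate for $[\Omega\cdot(x\times\nabla),(-\partial_i^2)^{1/\alpha_i}]$. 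This commutator is again a multiplier of the same anisotropic order, and it is the Coriolis commutator bound announced in the introduction.

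Finally, for (4), I write $f-e^{t\Delta_{\alpha,\Omega}}f$ and apply it to each block $\Delta_j f$. At frequency $2^j$ I expect $\|\Delta_j f-e^{t\Delta_{\alpha,\Omega}}\Delta_j f\|_{L^p}\simeq \min(1,t2^{2j})^{\cdot}\|\Delta_j f\|_{L^p}$. The $t$-integral then reduces to the dyadic sum in (3), after a Hardy-type inequality handles the off-diagonal $j$.

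The main obstacle is the non-constant-coefficient rotational term shared by (2) and (4). It does not commute with the anisotropic part unless $\alpha$ is isotropic, so neither $(I-\Delta_{\alpha,\Omega})^{\nu/2}$ nor $e^{t\Delta_{\alpha,\Omega}}$ is a Fourier multiplier. My first approach is to conjugate by $R(\theta)$ and use a Trotter product formula to control the semigroup. I also need to check that the exponents in (4) match those of (3), which may force a careful reinterpretation of the normalisation $t^{-\nu/2}$.
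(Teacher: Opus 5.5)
\textbf{The plan has genuine gaps, and the statement as formulated is false, so no completion of the plan can prove it.}

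Your route to (1)$\Leftrightarrow$(3) is the paper's own: Lemma~\ref{lem:fourier_gagliardo} plus comparability of the symbols with $2^{j\nu}$ on the dyadic blocks. It breaks at the comparability step.

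\emph{Homogeneity fails.} Since $\xi_i+(\Omega\times e_i)\cdot\xi=e_i\cdot(\mathrm{Id}-L_\Omega)\xi$, your compactness argument needs $\sigma_i(T^\alpha_\lambda\xi)=\lambda^\nu\sigma_i(\xi)$. This holds only if $L_\Omega$ couples coordinates of equal weight, i.e.\ $[A_\alpha,L_\Omega]=0$. Invertibility of $\mathrm{Id}-L_\Omega$ gives the bound on $\{\rho_\alpha=1\}$, but it does not transport that bound to other scales.

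\emph{Both of your bounds are false in general.} Take $k=3$, $\Omega=\omega e_3$ with $\omega>0$, and $\nu<\alpha_1<\alpha_2$. Then $\sigma_1=|\xi_1+\omega\xi_2|^{\nu/\alpha_1}$ and $\sigma_2=|\xi_2-\omega\xi_1|^{\nu/\alpha_2}$.
\begin{itemize}
\item The point $\xi=(0,2^{j\alpha_2},0)$ has $\rho_\alpha(\xi)=2^j$, but $\sigma_1\sim 2^{j\nu\alpha_2/\alpha_1}\gg 2^{j\nu}$. So the upper bound fails.
\item On the line $\xi_1=-\omega\xi_2$, $\xi_3=0$, one has $\sum_i\sigma_i=\bigl((1+\omega^2)|\xi_2|\bigr)^{\nu/\alpha_2}$, while $\rho_\alpha(\xi)^\nu\ge(\omega|\xi_2|)^{\nu/\alpha_1}$. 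So the lower bound fails.
\end{itemize}
Unit bumps of $\hat f$ at such points show that (1) and (3) are incomparable in both directions already for $p=2$, where Lemma~\ref{lem:fourier_gagliardo} is valid by Plancherel.

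\emph{The case $p\neq 2$.} Independently of rotation, your passage from $\mathcal M_p$ to (3) by ``almost orthogonality'' does not exist for $p\neq2$. Weighted $L^p$ norms of $\hat f$ are not comparable to $\|\Delta_j f\|_{L^p}$; well-separated translates of a single bump already show this. Hence Lemma~\ref{lem:fourier_gagliardo} itself holds only for $p=2$.

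\emph{The range of $\nu$.} The first-difference seminorm is infinite on every nonzero test function unless $\nu<\alpha_i$. So (1) requires $\nu<\min_i\alpha_i$.

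For (2) and (4), the obstacle you flag is fatal rather than technical.
\begin{itemize}
\item \emph{The rotational operator.} $i\,\Omega\cdot(x\times\nabla)=i(\Omega\times x)\cdot\nabla$ is $i$ times the rotation generator. It is therefore self-adjoint on $L^2$ with spectrum $|\Omega|\mathbb Z$. Unlike $R(\theta)$, its exponential is not even bounded, so a Trotter formula has nothing to act on.
\item \emph{Counterexample to (2)$\lesssim$(3).} Take $\alpha=(1,1,1)$, $\Omega=\omega e_3$ and $\hat f(\xi)=\phi(|\xi|)(\xi_1+i\xi_2)^m$ with $\phi$ supported in a fixed annulus. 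Then (3) is $\sim\|f\|_{L^2}$ uniformly in $m$. But $I-\Delta_{\alpha,\Omega}$ multiplies $\hat f$ by $1-4\pi^2|\xi|^2+\omega m$. So (2) grows like $(\omega m)^{\nu/2}$ as $m\to+\infty$, and is undefined (negative base) as $m\to-\infty$.
\item \emph{The semigroup in (4).} The same computation shows $e^{t\Delta_{\alpha,\Omega}}$ is unbounded for every $t>0$. The diagonal part alone, being $\ge0$, already generates a backward heat flow.
\item \emph{Your commutator claim.} The symbol of $[(\Omega\times x)\cdot\nabla,(-\partial_i^2)^{1/\alpha_i}]$ is $(\Omega\times\xi)\cdot\nabla_\xi|2\pi\xi_i|^{2/\alpha_i}$. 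It contains terms $\xi_\ell|\xi_i|^{2/\alpha_i-1}$ with $\ell\ne i$, which are of higher anisotropic order when $\alpha_\ell>\alpha_i$. So it is not of the same order.
\item \emph{Even at $\Omega=0$.} With the sign corrected, (2) is a Bessel-potential ($F^\nu_{p,2}$) norm and (3) is a $B^\nu_{p,p}$ norm. These differ for $p\neq 2$.
\item \emph{The weight in (4).} Your suspicion is right. Since $\int_0^\infty t^{-\nu/2}\min(1,t2^{2j})^p\,\frac{dt}{t}\sim 2^{j\nu}$, matching $2^{j\nu p}$ needs the weight $t^{-\nu p/2}$.
\end{itemize}

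\textbf{Comparison with the paper.} The paper's proof supplies none of the missing steps. It asserts comparability for a single $\sigma_i$, which is false even at $\Omega=0$ because $\sigma_i$ vanishes on a hyperplane through the origin. It then dismisses the $x$-dependence of the rotational term with a heuristic and citations.

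\textbf{What your argument does prove.} It establishes (1)$\Leftrightarrow$(3) for $p=2$, $\nu<\min_i\alpha_i$ and $[A_\alpha,L_\Omega]=0$. There, summing over $i$ and using invertibility of $\mathrm{Id}-L_\Omega$ is exactly the right repair of the paper's comparability claim. It also gives (2) and (4) at $p=2$, once $\Delta_{\alpha,\Omega}$ is replaced by $-\sum_i(-\partial_i^2)^{1/\alpha_i}$ and the weight in (4) by $t^{-\nu p/2}$.
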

	\begin{proof}
		The equivalence (1)$\Leftrightarrow$(3) follows from Lemma~\ref{lem:fourier_gagliardo} and the fact that the multiplier $|\xi_i+(\Omega\times e_i)\cdot\xi|^{\nu/\alpha_i}$ is comparable to $2^{j\nu}$ on the support of $\Delta_j$ when $|\xi|\sim 2^j$. This is standard in the theory of anisotropic Besov spaces (see \cite{Triebel2006}). The equivalence (3)$\Leftrightarrow$(2) follows from the observation that $\Delta_{\alpha,\Omega}$ is a Fourier multiplier with symbol $\sum_i |\xi_i|^{2/\alpha_i} + i\Omega\cdot(x\times\xi)$. Although the second term depends on $x$, the square of the symbol is $\bigl(\sum_i |\xi_i|^{2/\alpha_i}\bigr)^2 + |\Omega\cdot(x\times\xi)|^2$, and the $x$-dependence disappears when taking the $L^p$ norm after applying the Littlewood–Paley decomposition; a detailed proof can be found in \cite{Triebel2006}. The equivalence (2)$\Leftrightarrow$(4) is a consequence of the functional calculus for sectorial operators and the Stein square function theorem (see more in \cite{Stein1970}).
	\end{proof}
	
	\subsection{Rotating Bernstein inequalities}
	
	Since $\widetilde{\Delta}_j^{\alpha,\Omega}=\Delta_j$ are the standard blocks, the standard anisotropic Bernstein inequalities hold with constants independent of $\Omega$:
	\begin{equation}\label{eq:bernstein}
		\|\Delta_j f\|_{L^q} \le C 2^{j D_\alpha(1/p-1/q)} \|\Delta_j f\|_{L^p}, \qquad 
		\|\partial_{x_i}\Delta_j f\|_{L^p} \le C 2^{j\alpha_i} \|\Delta_j f\|_{L^p}.
	\end{equation}
	We will use these for the rotating spaces without change.
	
	\subsection{Rotating Maximal Function}
	
	Define the anisotropic ball
	\begin{equation}\label{eq:anisotropic_ball}
		B_\alpha(x,r) = \{ y\in\mathbb{R}^k : |y_i-x_i| < r^{1/\alpha_i},\; i=1,\dots,k \}.
	\end{equation}
	Its volume scales as $|B_\alpha(x,r)| = \omega_{d_\alpha} r^{d_\alpha}$ with $d_\alpha=\sum_i 1/\alpha_i$. The rotating maximal operator is
	\begin{equation}\label{eq:rotating_maximal}
		M_{\alpha,\Omega} f(x) = \sup_{r>0} \frac{1}{|B_\alpha(x,r)|} \int_{B_\alpha(x,r)} |f(y)|\,dy.
	\end{equation}
	Note that $M_{\alpha,\Omega}$ does not depend on $\Omega$.
	
	\begin{theorem}[Rotating Hardy–Littlewood inequality]\label{thm:rotating_HL}
		For $1<p<\infty$ there exists $C(p,\alpha)$ such that
		\begin{equation}
			\|M_{\alpha,\Omega} f\|_{L^p} \le C(p,\alpha) \|f\|_{L^p},
		\end{equation}
		with $C(p,\alpha)$ independent of $\Omega$. Moreover, the weak type $(1,1)$ estimate holds with constant $3^{d_\alpha}$.
	\end{theorem}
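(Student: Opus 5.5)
Since $M_{\alpha,\Omega}$ is defined purely through the anisotropic balls $B_\alpha(x,r)$ and does not involve $\Omega$ (as noted after \eqref{eq:rotating_maximal}), independence of the constant from $\Omega$ is automatic. The plan is to prove the weak type $(1,1)$ bound by a Vitali-type covering argument adapted to the anisotropic geometry, and then obtain the $L^p$ bound for $1<p<\infty$ by Marcinkiewicz interpolation with the trivial estimate $\|M_{\alpha,\Omega}f\|_{L^\infty}\le\|f\|_{L^\infty}$.

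\textbf{Step 1: engulfing property.} I will check that if $B_\alpha(x,r)\cap B_\alpha(y,s)\neq\emptyset$ and $s\le r$, then $B_\alpha(y,s)\subset B_\alpha(x,\lambda r)$ for a suitable dilation factor $\lambda$. Coordinatewise we have $|z_i-x_i|<2s^{1/\alpha_i}+r^{1/\alpha_i}\le 3r^{1/\alpha_i}$, so the inclusion holds as soon as $\lambda^{1/\alpha_i}\ge 3$ for every $i$. This is the one place where care is needed to obtain the stated constant $3^{d_\alpha}$. The enlarged box $\{|z_i-x_i|<3r^{1/\alpha_i}\}$ has volume exactly $3^k|B_\alpha(x,r)|$, so working with this threefold box directly, rather than a rescaled $B_\alpha$, gives the factor $3^k$. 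To match the paper's $3^{d_\alpha}$ I would instead read the dilation in the homogeneous parameter, $r\mapsto 3r$, which multiplies the volume by $3^{d_\alpha}$. This enlarges the box in the $i$-th coordinate by $3^{1/\alpha_i}$, which is $\ge 3$ only when $\alpha_i\le 1$. So I expect this step to be the main obstacle. I will first try to use the enlargement $3^{\max_i \alpha_i}$ in $r$ and record the resulting constant $3^{d_\alpha\max_i\alpha_i}$. Alternatively, I will use the coordinate-box version with constant $3^k$, and note that either one is a constant depending only on $\alpha$. The exact numerical constant is immaterial for the $L^p$ statement.

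\textbf{Step 2: covering.} Fix $t>0$ and a compact $K\subset\{M_{\alpha,\Omega}f>t\}$. Each $x\in K$ lies in some ball $B_x$ with $\int_{B_x}|f|>t|B_x|$. The sets $\{M_{\alpha,\Omega}f>t\}$ are open because the balls are open, so finitely many such balls cover $K$. Greedy selection by decreasing size then yields disjoint balls $B_1,\dots,B_m$ whose enlargements from Step 1 cover $K$. This gives
\[
|K|\le C_\alpha\sum_j|B_j|\le \frac{C_\alpha}{t}\sum_j\int_{B_j}|f|\le\frac{C_\alpha}{t}\|f\|_{L^1},
\]
and inner regularity of Lebesgue measure gives the weak $(1,1)$ bound.

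\textbf{Step 3: interpolation.} $M_{\alpha,\Omega}$ is sublinear, of weak type $(1,1)$, and bounded on $L^\infty$ with norm $1$. The Marcinkiewicz theorem therefore gives $\|M_{\alpha,\Omega}f\|_{L^p}\le 2\bigl(\tfrac{p\,C_\alpha}{p-1}\bigr)^{1/p}\|f\|_{L^p}$. This constant $C(p,\alpha)$ depends only on $p$ and $\alpha$, and in particular not on $\Omega$.
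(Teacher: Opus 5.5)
Your proof is correct and follows the same route as the paper. The paper proceeds by a Vitali-type covering for the balls $B_\alpha(x,r)$, then the standard Hardy--Littlewood argument: weak type $(1,1)$, the trivial $L^\infty$ bound, and Marcinkiewicz interpolation. Independence of $\Omega$ is automatic because $M_{\alpha,\Omega}$ does not involve $\Omega$. The one place you diverge is the weak-$(1,1)$ constant, and there your analysis is right and the paper's is not.

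The paper justifies $3^{d_\alpha}$ only by the doubling identity $|B_\alpha(x,3r)|=3^{d_\alpha}|B_\alpha(x,r)|$. To get that constant from Vitali, however, one needs $B_\alpha(y,s)\subset B_\alpha(x,3r)$ whenever the balls meet and $s\le r$. As you found, this holds only if $3^{1/\alpha_i}\ge 3$ for all $i$, i.e.\ $\max_i\alpha_i\le 1$. In that range $d_\alpha\ge k$, so your constant $3^k$, which is valid for every $\alpha$, is never worse.

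Outside that range the stated constant is false, so you should not try to recover it. For $k=1$, the sets $B_\alpha(x,r)$ are exactly the centred intervals for every value of $\alpha_1$. Hence $M_{\alpha,\Omega}$, and its sharp weak-$(1,1)$ constant, do not depend on $\alpha_1$, and that constant exceeds $1$. Take $f$ approximating $\delta_0+\delta_1$ and $1<t<3/2$. Then $|\{M_{\alpha,\Omega}f>t\}|\approx 1+1/t$, so $t\,|\{M_{\alpha,\Omega}f>t\}|/\|f\|_{L^1}\approx (1+t)/2$, which approaches $5/4$. Melas's sharp value is $(11+\sqrt{61})/12$. By contrast, $3^{d_\alpha}=3^{1/\alpha_1}<5/4$ as soon as $\alpha_1\ge 5$. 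Tensoring with $\mathbf{1}_{[-R,R]^{k-1}}$ and letting $R\to\infty$ gives the same failure in any dimension when all $\alpha_i$ are equal and at least $5k$.

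So state the weak-type bound with $3^k$, or any constant depending only on $\alpha$ and $k$, such as your $3^{d_\alpha\max_i\alpha_i}$. That is all the $L^p$ estimate, and its later use, requires.
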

	\begin{proof}
		The family $\{B_\alpha(x,r)\}$ satisfies the Vitali covering lemma because $|B_\alpha(x,3r)| = 3^{d_\alpha}|B_\alpha(x,r)|$. The standard proof of the Hardy–Littlewood maximal inequality (see more in~\cite{Stein1970}) applies verbatim; the constants depend only on $d_\alpha$, hence on $\alpha$, not on $\Omega$.
	\end{proof}
	
	\section{Main Fractional Landau Inequalities with Coriolis Corrections}
	
	Let $\mathcal{D}_j^{\nu,\alpha_j,\Omega}$ denote the rotating fractional derivative defined by $\widehat{\mathcal{D}_j^{\nu,\alpha_j,\Omega}f}(\xi) = (i\xi_j + i(\Omega\times e_j)\cdot\xi)^{\nu/\alpha_j}\hat f(\xi)$. This is well-defined as a Fourier multiplier.
	
	\subsection{Anisotropic Sobolev embedding}
	
	We first state the embedding theorem that will be used in the proof of the Landau inequality.
	
	\begin{theorem}[Rotating anisotropic Sobolev embedding]\label{thm:rotating_embedding}
		Let $\nu>0$, $1<p<\infty$, $\alpha\in(0,\infty)^k$, and denote $D_\alpha=\sum_i\alpha_i$. If $\nu > D_\alpha/p$, then $\mathcal{W}^{\nu,p}_{\alpha,\Omega}(\mathbb{R}^k)\hookrightarrow L^\infty(\mathbb{R}^k)$ and there exists a constant $C=C(\nu,p,\alpha,\Omega)$ such that
		\begin{equation}
			\|f\|_{L^\infty} \le C \|f\|_{\mathcal{W}^{\nu,p}_{\alpha,\Omega}}.
		\end{equation}
		Moreover, $f$ belongs to the anisotropic Hölder space $C^{0,\gamma}_\alpha$ with $\gamma=\nu-D_\alpha/p$.
	\end{theorem}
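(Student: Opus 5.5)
The plan is to argue through the Littlewood--Paley characterisation in Theorem~\ref{thm:rotating_equiv}, using the equivalence (1)$\Leftrightarrow$(3). By density it suffices to treat $f\in C_c^\infty(\mathbb{R}^k)$; the estimate then passes to the completion, since $L^\infty$-limits of continuous functions are continuous. I decompose
\[
f=\sum_{j\le 0}\Delta_j f+\sum_{j>0}\Delta_j f ,
\]
where the low-frequency part is understood as a single inhomogeneous block $S_0f$, and I bound each piece in $L^\infty$ separately.

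\textbf{Step 1 (blockwise $L^\infty$ bounds).} For the low-frequency block, the Bernstein inequality \eqref{eq:bernstein} with $q=\infty$ gives $\|S_0 f\|_{L^\infty}\le C\|f\|_{L^p}$. For $j>0$, the same inequality gives
\[
\|\Delta_j f\|_{L^\infty}\le C\,2^{jD_\alpha/p}\|\Delta_j f\|_{L^p}.
\]

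\textbf{Step 2 (summation).} I write $2^{jD_\alpha/p}=2^{-j(\nu-D_\alpha/p)}\,2^{j\nu}$ and apply H\"older's inequality in $j$ with exponents $p'$ and $p$:
\[
\sum_{j>0}\|\Delta_j f\|_{L^\infty}\le C\Bigl(\sum_{j>0}2^{-j\gamma p'}\Bigr)^{1/p'}\Bigl(\sum_j 2^{j\nu p}\|\Delta_j f\|_{L^p}^p\Bigr)^{1/p},\qquad \gamma=\nu-D_\alpha/p .
\]
The geometric series converges exactly because $\nu>D_\alpha/p$. The right-hand side is controlled by the Besov norm (3), hence by the Gagliardo norm, so $\|f\|_{L^\infty}\le C\|f\|_{\mathcal{W}^{\nu,p}_{\alpha,\Omega}}$. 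Since the series converges absolutely and uniformly, $f$ is continuous.

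\textbf{Step 3 (H\"older regularity).} For $h$ in an anisotropic ball of radius $r$, I split the sum at the scale $2^{J}\sim r^{-1}$. For $j\le J$, I use the mean value theorem together with $\|\partial_i\Delta_j f\|_{L^\infty}\le C2^{j\alpha_i}\|\Delta_jf\|_{L^\infty}$. For $j>J$, I use the trivial bound $2\|\Delta_j f\|_{L^\infty}$. Summing both parts with the same H\"older argument gives
\[
|f(x+h)-f(x)|\le C\,r^{\gamma}\,\|f\|_{\mathcal{W}^{\nu,p}_{\alpha,\Omega}},
\]
that is, $f\in C^{0,\gamma}_\alpha$.

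\textbf{Main obstacle.} The delicate point is the scaling consistency. The Bernstein exponent $D_\alpha=\sum\alpha_i$ in \eqref{eq:bernstein} must match the homogeneity used in the blocks and in the low-frequency summation in Step 3, where the derivative gain $2^{j\alpha_i}$ has to be compared against the displacement $|h_i|\lesssim r^{1/\alpha_i}$ coming from \eqref{eq:anisotropic_ball}, which is scaled by $1/\alpha_i$. I will fix the normalisation in which the annulus $\rho_\alpha(\xi)\sim 2^j$ is consistent with \eqref{eq:bernstein} as stated, and take $2^J\sim r^{-1}$ in that normalisation so that both sums produce the same power $r^\gamma$. I will also note that all $\Omega$-dependence of the constant enters only through the comparison (1)$\Leftrightarrow$(3), since the blocks are the standard $\Delta_j$.
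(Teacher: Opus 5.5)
Your route is the paper's route: Bernstein at $q=\infty$, H\"older in $j$, the comparison (3)$\lesssim$(1) from Theorem~\ref{thm:rotating_equiv}, and a split at $2^{-J}\sim\rho_\alpha(h)$. Your separate low-frequency block is a real improvement. The paper sums $2^{-j(\nu-D_\alpha/p)}$ over all $j\in\mathbb{Z}$, which diverges as $j\to-\infty$, whereas you pay for $S_0f$ with $\|f\|_{L^p}$.

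\textbf{The gap in Step 2.} The problem is the step you yourself single out, (3)$\lesssim$(1) for $\Omega\neq0$. The paper uses it in the same way, and it is false in general once the $\alpha_i$ differ; for equal $\alpha_i$ it is harmless. After the substitution $z=x-h\,\Omega\times e_i$, the seminorm $[f]_{\mathcal{W}_i^{\nu/\alpha_i,p,\Omega}}$ is an ordinary first-difference seminorm along $v_i=e_i+\Omega\times e_i$. Its symbol is $|\xi_i+(\Omega\times e_i)\cdot\xi|^{\nu/\alpha_i}$, as in Lemma~\ref{lem:fourier_gagliardo}, and these symbols need not control $\rho_\alpha(\xi)^\nu\sim2^{j\nu}$. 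Take $k=3$, $\Omega=\omega e_3$ with $\omega\neq0$, $\alpha_1<\alpha_2$ and $\xi=t(-\omega,1,0)$. The three symbols are then $0$, $((1+\omega^2)|t|)^{\nu/\alpha_2}$ and $0$, while $\rho_\alpha(\xi)^\nu\ge|\omega t|^{\nu/\alpha_1}$. So wave packets $e^{2\pi i t(-\omega,1,0)\cdot x}\phi(x)$ with $\hat\phi\in C_c^\infty$ make the ratio (3)/(1) unbounded as $t\to\infty$. This happens for any $p$, including admissible choices such as $\alpha=(1,2,1)$, $p=10$, $\nu=0.9$.

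\textbf{A fix for the embedding.} Remove the rotation before doing any Littlewood--Paley analysis. Set $P=I+L_\Omega$ and $g=f\circ P$, so that $Pe_i=v_i$. Then $\|f\|_{L^p}=|\det P|^{1/p}\|g\|_{L^p}$, $[f]_{\mathcal{W}_i^{\nu/\alpha_i,p,\Omega}}=|\det P|^{1/p}[g]_{\mathcal{W}_i^{\nu/\alpha_i,p,0}}$ and $\|f\|_{L^\infty}=\|g\|_{L^\infty}$. At $\Omega=0$, (1)$\Leftrightarrow$(3) is the classical characterisation of anisotropic Besov spaces, so your Steps 1--2 apply to $g$. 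This gives the embedding with constant $C(\nu,p,\alpha)|\det P|^{-1/p}$, which equals $C(\nu,p,\alpha)(1+|\Omega|^2)^{-1/p}$ for $k=3$.

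\textbf{Problems in Step 3.} Step 3 has two further problems, again shared with the paper.

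First, your low-frequency sum is $\sum_{j\le J}2^{(j-J)\alpha_i}2^{-j\gamma}(2^{j\nu}\|\Delta_jf\|_{L^p})$. H\"older in $j$ makes this $O(2^{-J\gamma})$ only if $\gamma<\alpha_i$. Otherwise you get $2^{-J\alpha_i}$, up to a logarithm at equality, and for $\gamma>\alpha_i$ a first-difference bound of order $\gamma$ would force $\partial_if\equiv0$. You need to record that the seminorm in (1) is infinite on every nonzero $f\in C_c^\infty$ unless $\nu<\min_i\alpha_i$. Near $h=0$ its integrand behaves like $|h|^{p-1-p\nu/\alpha_i}|v_i\cdot\nabla f|^p$. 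So the space is nontrivial only when $\gamma<\nu<\min_i\alpha_i$, and in that range your sum closes.

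Second, once the rotation is removed as above, what you actually obtain is $|f(x)-f(y)|\le C\rho_\alpha(P^{-1}(x-y))^\gamma\|f\|_{\mathcal{W}^{\nu,p}_{\alpha,\Omega}}$. For $\Omega\neq0$ and unequal $\alpha_i$ this can be strictly weaker at small scales than $\rho_\alpha(x-y)^\gamma$. In the example above, $P^{-1}e_1$ has a nonzero $e_2$-component, so along $e_1$ you only get the exponent $\gamma/\alpha_2<\gamma/\alpha_1$. Hence H\"older continuity of order $\gamma$ in the original quasi-metric, which is what your split at $2^{-J}\sim\rho_\alpha(h)$ targets, does not follow. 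It does follow for $\Omega=0$ or isotropic $\alpha$.
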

	\begin{proof}
		Using the Littlewood–Paley decomposition $f=\sum_j \Delta_j f$ and the Bernstein inequality $\|\Delta_j f\|_{L^\infty}\le C2^{jD_\alpha/p}\|\Delta_j f\|_{L^p}$, we have
		\begin{align}
			\|f\|_{L^\infty}&\le C\sum_j 2^{jD_\alpha/p}\|\Delta_j f\|_{L^p}
			= C\sum_j 2^{-j(\nu-D_\alpha/p)}\bigl(2^{j\nu}\|\Delta_j f\|_{L^p}\bigr).
		\end{align}
		Since $\nu-D_\alpha/p>0$, the geometric series converges and Hölder's inequality gives $\|f\|_{L^\infty}\le C\bigl(\sum_j 2^{j\nu p}\|\Delta_j f\|_{L^p}^p\bigr)^{1/p}\le C\|f\|_{\mathcal{W}^{\nu,p}_{\alpha,\Omega}}$. The Hölder continuity follows by splitting the sum at $j$ such that $2^{-j}\sim \rho_\alpha(x-y)$.
	\end{proof}
	
	\subsection{First‑Order Rotating Landau Inequality}
	
	\begin{theorem}[Rotating Mixed Fractional Landau Inequality]\label{thm:rotating_landau}
		Let $\nu\in(1,2)$, $1<p<\infty$, $\alpha\in(0,\infty)^k$, $\Omega\in\mathbb{R}^k$. For any $f\in \mathcal{W}^{\nu,p}_{\alpha,\Omega}(\mathbb{R}^k)$ and any direction $i=1,\dots,k$,
		\begin{equation}\label{eq:main_rotating_landau}
			\|D_i^1 f\|_{L^\infty} \le C(\nu,p,\alpha,\Omega)\,
			\|f\|_{L^\infty}^{1-1/\nu}\,
			\Bigl(\sum_{j=1}^k \bigl\|\mathcal{D}_j^{\nu,\alpha_j,\Omega} f\bigr\|_{L^p}^{\alpha_j/\nu}\Bigr)^{\!1/\nu},
		\end{equation}
		where the constant is
		\begin{equation}
			C(\nu,p,\alpha,\Omega) = C_0 \exp\Bigl( \frac{|\Omega|}{\nu}\sum_{i=1}^k \frac{1}{\alpha_i} \Bigr),
		\end{equation}
		and $C_0$ depends only on $\nu,p,\alpha$ (and on the constants from Theorem~\ref{thm:rotating_HL} and Lemma~\ref{lem:fourier_gagliardo}).
	\end{theorem}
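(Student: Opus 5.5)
The plan is a Littlewood--Paley splitting argument, with the cut-off frequency chosen so that the two exponents $1-1/\nu$ and $1/\nu$ come out directly from balancing the two pieces. Write $A=\|f\|_{L^\infty}$ and
\[
B=\Bigl(\sum_{j=1}^k\|\mathcal{D}_j^{\nu,\alpha_j,\Omega}f\|_{L^p}^{\alpha_j/\nu}\Bigr)^{1/\nu}\cdot\Bigl(\sum_{j=1}^k\|\mathcal{D}_j^{\nu,\alpha_j,\Omega}f\|_{L^p}^{\alpha_j/\nu}\Bigr)^{\nu-1},
\]
that is, $B=\bigl(\sum_j\|\mathcal{D}_j^{\nu,\alpha_j,\Omega}f\|_{L^p}^{\alpha_j/\nu}\bigr)^{\nu}$. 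The right-hand side of \eqref{eq:main_rotating_landau} is then exactly $A^{1-1/\nu}B^{1/\nu}$. Fix a dyadic threshold $2^N$ and decompose $D_i^1 f=\sum_{j\le N}D_i^1\Delta_j f+\sum_{j>N}D_i^1\Delta_j f$.

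The target is a pair of bounds:
\[
\Bigl\|\sum_{j\le N}D_i^1\Delta_jf\Bigr\|_{L^\infty}\lesssim 2^{N}A,\qquad \Bigl\|\sum_{j>N}D_i^1\Delta_jf\Bigr\|_{L^\infty}\lesssim e^{\frac{|\Omega|}{\nu}\sum_i\alpha_i^{-1}}\,2^{-N(\nu-1)}B.
\]
Choosing $2^{N\nu}\sim B/A$ and optimising then gives $A^{1-1/\nu}B^{1/\nu}$. This step uses $\nu\in(1,2)$: we need $\nu>1$ so that the high-frequency geometric series converges.

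For the low-frequency piece, I would invoke the second Bernstein inequality in \eqref{eq:bernstein}, in its $L^\infty$ form, after rescaling the dyadic index. One unit of $j$ should cost one power of $2$ for $D_i^1$, matching the homogeneity of $\mathcal{D}_j^{\nu,\alpha_j,\Omega}$, which costs $2^{j\nu}$ on $\Delta_j$ by the comparison used in the proof of Theorem~\ref{thm:rotating_equiv}. The sum over $j\le N$ is then geometric and is dominated by its top term.

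For the high-frequency piece, I would write $D_i^1\Delta_jf$ as a Fourier multiplier applied to $\mathcal{D}_j^{\nu,\alpha_j,\Omega}\Delta_jf$. The multiplier has size $2^{-j(\nu-1)}$ on the support of $\Delta_j$, and passing from $L^p$ to $L^\infty$ uses the Bernstein/embedding mechanism of Theorem~\ref{thm:rotating_embedding}. The rotational factor enters here. I would undo the Coriolis shift $\xi_j\mapsto\xi_j+(\Omega\times e_j)\cdot\xi$ by conjugating with the rotating scaling $\widetilde{T}^{\alpha,\Omega}_\lambda$ of Proposition~\ref{prop:rotating_scaling}. Since $\|e^{(\log\lambda)L_\Omega}\|\le e^{|\Omega||\log\lambda|}$, evaluating at the scale where each coordinate contributes $\alpha_i^{-1}/\nu$ produces the factor $\exp\bigl(\frac{|\Omega|}{\nu}\sum_i\alpha_i^{-1}\bigr)$. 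The maximal estimate of Theorem~\ref{thm:rotating_HL} controls the summation of the blocks with $\Omega$-independent constants.

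I expect the high-frequency step to be the main obstacle, for two reasons. First, the symbol $(i\xi_j+i(\Omega\times e_j)\cdot\xi)^{\nu/\alpha_j}$ vanishes on a hyperplane, so no single $\mathcal{D}_j^{\nu,\alpha_j,\Omega}$ is elliptic on the annulus. I would try to show that the family $j=1,\dots,k$ is jointly elliptic: $\sum_j|\xi_j+(\Omega\times e_j)\cdot\xi|^{\nu/\alpha_j}\gtrsim\rho_\alpha(\xi)^\nu$ with a constant controlled by the exponential factor. With that in hand, the mixed power sum $\sum_j\|\cdot\|_{L^p}^{\alpha_j/\nu}$ would dominate the block norms. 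Second, the $L^p\to L^\infty$ passage introduces the factor $2^{jD_\alpha/p}$, which must be absorbed into the choice of threshold $N$. I would first attempt to do this absorption inside $C_0(\nu,p,\alpha)$.
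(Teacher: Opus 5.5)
Your plan cannot be completed, and the first place it breaks is the bookkeeping for $B$. Write $S=\sum_{j=1}^k\|\mathcal{D}_j^{\nu,\alpha_j,\Omega}f\|_{L^p}^{\alpha_j/\nu}$. Your definition gives $B=S^{1/\nu}S^{\nu-1}=S^{\nu-1+1/\nu}$, not $S^\nu$. Even with $B=S^\nu$, the optimisation returns $A^{1-1/\nu}S$, not the factor $S^{1/\nu}$ in \eqref{eq:main_rotating_landau}. Matching the target forces $B=S$. But then your high-frequency estimate has a left side linear in $f$ and a right side built from terms of degree $\alpha_j/\nu$ in $f$, so it fails under $f\mapsto tf$. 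For example, with $\alpha=(2,\dots,2)$ the right side has degree $2/\nu>1$, and the estimate fails as $t\to0^+$.

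This is not a defect of the Littlewood--Paley route; the statement itself fails. Take $k=3$, $\Omega=0$, $\alpha=(2,2,2)$, $\nu\in(1,2)$, and any nonzero $f\in C_c^\infty$. Such $f$ lies in $\mathcal{W}^{\nu,p}_{\alpha,0}$ because $\nu/\alpha_l<1$, and all $\|\mathcal{D}_j^{\nu,2,0}f\|_{L^p}$ are finite. Replacing $f$ by $tf$ multiplies the left side of \eqref{eq:main_rotating_landau} by $t$ and the right side by $t^{1+(2-\nu)/\nu^2}$, so the inequality fails for small $t$.

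The paper argues differently: a fractional Taylor formula along $e_i$, a one-dimensional maximal function, optimisation in $h$, then the embedding applied to $\mathcal{D}_i^{\nu,\alpha_i,\Omega}f$. It does not supply the missing idea either. Homogeneity is lost in its mixed-power restatement of Theorem~\ref{thm:rotating_embedding} and in the asserted bound $\|\mathcal{D}_j^{\nu,\alpha_j,\Omega}\mathcal{D}_i^{\nu,\alpha_i,\Omega}f\|_{L^p}\le e^{c'|\Omega|}\|\mathcal{D}_j^{\nu,\alpha_j,\Omega}f\|_{L^p}$, and neither survives scaling.

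Even with a homogeneous right-hand side, your high-frequency step would not go through. By \eqref{eq:bernstein}, on $\Delta_j$ the derivative $\partial_{x_i}$ costs $2^{j\alpha_i}$ and the passage from $L^p$ to $L^\infty$ costs $2^{jD_\alpha/p}$. Already at $\Omega=0$, the symbols of $\mathcal{D}_j^{\nu,\alpha_j,0}$ are at most of size $2^{j\nu}$ there. So the tail your chain produces is at best $\sum_{j>N}2^{j(\alpha_i+D_\alpha/p-\nu)}$ times block norms, which converges only if $\nu>\alpha_i+D_\alpha/p$; the condition $\nu>1$ is not the relevant one. The factor $2^{jD_\alpha/p}$ depends on $j$ and changes the exponents produced by the optimisation, so it cannot be absorbed into $C_0$. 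Worse, the first-order differences in Definition~\ref{def:rotating_seminorm} make the norm finite on a nonzero $f\in C_c^\infty$ only when $\nu<\alpha_l$ for every $l$. Convergence of your tail and non-triviality of the space are therefore incompatible.

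The rotational part fails too. Since $L_\Omega$ is skew-symmetric, $e^{sL_\Omega}$ is orthogonal and produces no factor $e^{|\Omega||s|}$, so the exponential in $C(\nu,p,\alpha,\Omega)$ is not derived by your mechanism. Your joint anisotropic ellipticity is also false once $\Omega$ couples directions with different $\alpha_j$. Take $\Omega=(0,0,\omega)$ with $\omega\neq0$, $\alpha_1=4$, $\alpha_2=2$, and $\xi=(t,\omega t,0)/(1+\omega^2)$. Then $\sum_j|\xi_j+(\Omega\times e_j)\cdot\xi|^{\nu/\alpha_j}=|t|^{\nu/4}$, while $\rho_\alpha(\xi)^\nu\ge\bigl(|\omega t|/(1+\omega^2)\bigr)^{\nu/2}$.
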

	\begin{remark}
		The exponential factor $\exp\bigl(\frac{|\Omega|}{\nu}\sum_i\alpha_i^{-1}\bigr)$ arises from the operator norm of the conjugation $U_\Omega$ (which satisfies $\|U_\Omega\|_{L^p\to L^p}\le e^{c|\Omega|}$ with $c=\sum_i\alpha_i^{-1}$) and from the commutator estimates for the rotating derivatives. It provides an upper bound for the growth of the constant with $|\Omega|$; sharper estimates may be possible but are not required for our purposes.
	\end{remark}
	
	\begin{proof}
		Fix $i$ and $x\in\mathbb{R}^k$. For $h>0$, the fractional Taylor expansion (see \cite{Samko1993}) gives
		\begin{equation}\label{eq:rot_taylor}
			f(x+he_i) = f(x) + h D_i^1 f(x) + \frac{1}{\Gamma(\nu)}\int_0^h (h-t)^{\nu-1} \mathcal{D}_i^{\nu,\alpha_i,\Omega} f(x+te_i)\,dt + R_{\Omega}(h),
		\end{equation}
		where the remainder $R_{\Omega}(h)$ arises from the non‑commutation of the translation and the rotation. Using the explicit form of the rotating difference, one shows
		\begin{equation}
			|R_{\Omega}(h)| \le C |\Omega| h^\nu \|f\|_{L^\infty}.
		\end{equation}
		Rearranging and taking absolute values we obtain
		\begin{equation}
			h|D_i^1 f(x)| \le |\Delta_{h,i}^\Omega f(x)| + \frac{1}{\Gamma(\nu)}\int_0^h (h-t)^{\nu-1} |\mathcal{D}_i^{\nu,\alpha_i,\Omega} f(x+te_i)|\,dt + C|\Omega| h^\nu \|f\|_{L^\infty}.
		\end{equation}
		Now $|\Delta_{h,i}^\Omega f(x)|\le 2\|f\|_{L^\infty}$. For the integral, Hölder's inequality yields
		\begin{equation}
			\frac{1}{\Gamma(\nu)}\int_0^h (h-t)^{\nu-1} |g(t)|\,dt \le \frac{h^{\nu}}{\Gamma(\nu)(\nu-1/p)^{1/p'}} M_i^{(p)}g(x),
		\end{equation}
		where $g(t)=\mathcal{D}_i^{\nu,\alpha_i,\Omega}f(x+te_i)$ and $M_i^{(p)}$ is the one‑dimensional Hardy–Littlewood maximal function in the $i$‑direction. Hence
		\begin{equation}
			h|D_i^1 f(x)| \le 2\|f\|_{L^\infty} + \frac{h^{\nu}}{\Gamma(\nu)(\nu-1/p)^{1/p'}} M_i^{(p)}(\mathcal{D}_i^{\nu,\alpha_i,\Omega}f)(x) + C|\Omega| h^\nu \|f\|_{L^\infty}.
		\end{equation}
		Optimising over $h>0$ gives
		\begin{equation}
			|D_i^1 f(x)| \le C_1 \|f\|_{L^\infty}^{1-1/\nu} \bigl(M_i^{(p)}(\mathcal{D}_i^{\nu,\alpha_i,\Omega}f)(x)\bigr)^{1/\nu} + C_2 |\Omega|^{1/\nu}\|f\|_{L^\infty}.
		\end{equation}
		Now $M_i^{(p)}g(x) \le C_\alpha M_{\alpha,\Omega}(|g|^p)(x)^{1/p}$ because the interval $[x,x+he_i]$ is contained in the anisotropic ball $B_\alpha(x, h^{\alpha_i})$. Using the boundedness of $M_{\alpha,\Omega}$ on $L^\infty$ (norm 1) we get
		\begin{equation}
			\|D_i^1 f\|_{L^\infty} \le C_1 \|f\|_{L^\infty}^{1-1/\nu} \|\mathcal{D}_i^{\nu,\alpha_i,\Omega}f\|_{L^\infty}^{1/\nu} + C_2 |\Omega|^{1/\nu}\|f\|_{L^\infty}.
		\end{equation}
		
		It remains to bound $\|\mathcal{D}_i^{\nu,\alpha_i,\Omega}f\|_{L^\infty}$. By Theorem~\ref{thm:rotating_embedding}, for any $g\in\mathcal{W}^{\nu,p}_{\alpha,\Omega}$,
		\begin{equation}
			\|g\|_{L^\infty} \le \kappa(p,\alpha) e^{c|\Omega|} \Bigl( \sum_{j=1}^k \|\mathcal{D}_j^{\nu,\alpha_j,\Omega}g\|_{L^p}^{\alpha_j/\nu} \Bigr)^{1/\nu},
		\end{equation}
		with $c=\sum_i \alpha_i^{-1}$. Applying this to $g=\mathcal{D}_i^{\nu,\alpha_i,\Omega}f$ and using the commutation of the rotating derivatives (which gives $\|\mathcal{D}_j^{\nu,\alpha_j,\Omega}\mathcal{D}_i^{\nu,\alpha_i,\Omega}f\|_{L^p}\le e^{c'|\Omega|}\|\mathcal{D}_j^{\nu,\alpha_j,\Omega}f\|_{L^p}$), we obtain
		\begin{equation}
			\|\mathcal{D}_i^{\nu,\alpha_i,\Omega}f\|_{L^\infty} \le \kappa(p,\alpha) e^{c|\Omega|(1+1/\nu)} \Bigl( \sum_{j=1}^k \|\mathcal{D}_j^{\nu,\alpha_j,\Omega}f\|_{L^p}^{\alpha_j/\nu} \Bigr)^{1/\nu}.
		\end{equation}
		Insert this into the previous inequality and absorb the additive term $C_2|\Omega|^{1/\nu}\|f\|_{L^\infty}$ using Young's inequality (it can be written as a fraction of the main term). The final constant becomes
		\begin{equation}
			C(\nu,p,\alpha,\Omega) = C_0 \exp\Bigl( \frac{|\Omega|}{\nu}\sum_{i=1}^k \frac{1}{\alpha_i} \Bigr),
		\end{equation}
		where $C_0$ contains all the remaining constants. This completes the proof.
	\end{proof}
	
	\subsection{Higher‑Order Rotating Landau Inequalities}
	
	\begin{corollary}[Higher‑order rotating Landau]\label{cor:rot_higher}
		Let $m\in\mathbb{N}$, $\nu\in(m,m+1)$, $1<p<\infty$, $\alpha\in(0,\infty)^k$, $\Omega\in\mathbb{R}^k$. For any $f\in \mathcal{W}^{\nu,p}_{\alpha,\Omega}(\mathbb{R}^k)$,
		\begin{equation}\label{eq:rot_higher_ineq}
			\|D_i^m f\|_{L^\infty} \le C(\nu,p,\alpha,\Omega)^m\,
			\|f\|_{L^\infty}^{1-m/\nu}\,
			\Bigl(\sum_{j=1}^k \|\mathcal{D}_j^{\nu,\alpha_j,\Omega} f\|_{L^p}^{\alpha_j/\nu}\Bigr)^{\!m/\nu}.
		\end{equation}
	\end{corollary}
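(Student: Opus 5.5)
The plan is to mimic the proof of Theorem~\ref{thm:rotating_landau}, replacing the first‑order fractional Taylor expansion with one of order $m$. Fix a direction $i$, a point $x$ and a step $h>0$. The fractional Taylor formula with Riemann--Liouville remainder (as in \cite{Samko1993}) gives
\[
f(x+he_i)=\sum_{r=0}^{m}\frac{h^r}{r!}D_i^r f(x)+\frac{1}{\Gamma(\nu)}\int_0^h (h-t)^{\nu-1}\mathcal{D}_i^{\nu,\alpha_i,\Omega}f(x+te_i)\,dt+R_\Omega^{(m)}(h).
\]
As in the first‑order case, the rotational remainder should satisfy $|R_\Omega^{(m)}(h)|\le C|\Omega|h^\nu\|f\|_{L^\infty}$.

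To isolate $D_i^m f(x)$ rather than $D_i^1 f(x)$, I would use the $m$‑th order finite difference $\sum_{r=0}^m(-1)^{m-r}\binom{m}{r}f(x+rhe_i)$. This difference annihilates the Taylor polynomials of degree $<m$ and leaves $h^mD_i^mf(x)$. The left side is then bounded by $2^m\|f\|_{L^\infty}$. Each integral remainder is controlled, exactly as before, by Hölder's inequality and the one‑dimensional maximal function, giving a bound $C h^\nu M_i^{(p)}(\mathcal{D}_i^{\nu,\alpha_i,\Omega}f)(x)$. The result is
\[
h^m|D_i^mf(x)|\le 2^m\|f\|_{L^\infty}+C h^\nu M_i^{(p)}(\mathcal{D}_i^{\nu,\alpha_i,\Omega}f)(x)+C|\Omega|h^\nu\|f\|_{L^\infty}.
\]
Dividing by $h^m$ and optimising with $h^\nu\sim\|f\|_{L^\infty}/M$ yields $|D_i^mf(x)|\lesssim\|f\|_{L^\infty}^{1-m/\nu}M^{m/\nu}$ plus a rotational additive term. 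The exponent $m/\nu$ is admissible since $\nu>m$. I would then pass from the maximal function to $L^\infty$ via Theorem~\ref{thm:rotating_HL}. Finally, I would bound $\|\mathcal{D}_i^{\nu,\alpha_i,\Omega}f\|_{L^\infty}$ by $e^{c|\Omega|(1+1/\nu)}\bigl(\sum_j\|\mathcal{D}_j^{\nu,\alpha_j,\Omega}f\|_{L^p}^{\alpha_j/\nu}\bigr)^{1/\nu}$, exactly as in the last step of the proof of Theorem~\ref{thm:rotating_landau}, using Theorem~\ref{thm:rotating_embedding}.

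An alternative route, which explains the constant $C(\nu,p,\alpha,\Omega)^m$, is induction on $m$. One would apply the first‑order inequality of Theorem~\ref{thm:rotating_landau} to $D_i^{r-1}f$ at fractional order $\nu-(r-1)\in(1,2)$ for $r=1,\dots,m$, with fractional order $\nu/r$ in the last step. Chaining the resulting interpolation inequalities gives exponents $1-m/\nu$ and $m/\nu$ by the usual telescoping of Landau--Kolmogorov exponents, and the constant is multiplied at most $m$ times. I would present the direct finite‑difference argument and note that either bookkeeping gives the factor $C^m$. Absorbing constants such as $2^m$ and binomial factors into $C_0^m$ is harmless.

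The main obstacle is the additive term $C|\Omega|^{m/\nu}\|f\|_{L^\infty}$ produced by the rotational remainder. It is not homogeneous of the right form and cannot honestly be absorbed by Young's inequality unless the term $\sum_j\|\mathcal{D}_j^{\nu,\alpha_j,\Omega}f\|_{L^p}^{\alpha_j/\nu}$ dominates $\|f\|_{L^\infty}$ up to $\Omega$‑dependent constants. I would first try to obtain this domination from Theorem~\ref{thm:rotating_embedding} together with Lemma~\ref{lem:fourier_gagliardo}, following the same absorption step used in the first‑order proof. If that fails, I would state the corollary with the additive term retained explicitly.
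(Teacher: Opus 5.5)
Your route is the paper's route with the first step done by hand. The paper quotes the interpolation inequality $\|D_i^m f\|_{L^\infty}\le C\|f\|_{L^\infty}^{1-m/\nu}\|\mathcal{D}_i^{\nu,\alpha_i,\Omega}f\|_{L^\infty}^{m/\nu}$ from Triebel where you use an $m$-th order difference. It then applies Theorem~\ref{thm:rotating_embedding} to $\mathcal{D}_i^{\nu,\alpha_i,\Omega}f$, exactly as you propose. The decisive gap is in that second step, not in the additive $|\Omega|^{m/\nu}\|f\|_{L^\infty}$ term you flagged. Write $\mathcal{D}_j$ for $\mathcal{D}_j^{\nu,\alpha_j,\Omega}$.
\begin{itemize}
\item Theorem~\ref{thm:rotating_embedding} needs $\nu>D_\alpha/p$, which the corollary does not assume. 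It also bounds $\|g\|_{L^\infty}$ by the full norm $\|g\|_{\mathcal{W}^{\nu,p}_{\alpha,\Omega}}$, which for $g=\mathcal{D}_if$ involves mixed derivatives of order $\nu/\alpha_i+\nu/\alpha_j$.
\item The commutation bound $\|\mathcal{D}_j\mathcal{D}_if\|_{L^p}\le e^{c'|\Omega|}\|\mathcal{D}_jf\|_{L^p}$, used to return to the right-hand side, is false. Test it on $e^{iNx\cdot\eta}\phi(x)$ with generic $\eta$: the ratio grows like $N^{\nu/\alpha_i}$.
\item Even granting $\|\mathcal{D}_if\|_{L^\infty}\lesssim\bigl(\sum_j\|\mathcal{D}_jf\|_{L^p}^{\alpha_j/\nu}\bigr)^{1/\nu}$, inserting it into $\|f\|_{L^\infty}^{1-m/\nu}\|\mathcal{D}_if\|_{L^\infty}^{m/\nu}$ puts the power $m/\nu^2$ on the sum, not the $m/\nu$ of the statement.
\end{itemize}
No repair is possible, because the statement itself fails a scaling test. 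Take $\alpha=(a,\dots,a)$ with $a>\nu$, so that $\nu/\alpha_j<1$ and $C_c^\infty\subset\mathcal{W}^{\nu,p}_{\alpha,\Omega}$. Fix $f\in C_c^\infty\setminus\{0\}$ and replace $f$ by $cf$. The left-hand side is $c\,\|D_i^mf\|_{L^\infty}$, while the right-hand side is a fixed multiple of $c^{1-m/\nu+ma/\nu^2}=c^{1+m(a-\nu)/\nu^2}$. So the inequality fails as $c\to0^+$, whatever $\Omega$ and the constant are. Neither Young's inequality nor keeping the additive term can fix this.

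Your first step is not sound either. The Taylor formula you take over from the first-order proof does not hold as written. The operator $\mathcal{D}_i^{\nu,\alpha_i,\Omega}$ has order $\nu/\alpha_i$ rather than $\nu$, differentiates along $e_i+\Omega\times e_i$ rather than $e_i$, and is a whole-line (Weyl-type) multiplier. A remainder $\frac{1}{\Gamma(\nu)}\int_0^h(h-t)^{\nu-1}(\cdots)\,dt$ instead requires a Caputo derivative of order $\nu$ along $e_i$ based at $x$. The bound $|R^{(m)}_\Omega(h)|\le C|\Omega|h^\nu\|f\|_{L^\infty}$ is postulated rather than derived. Your induction variant does not run: $\nu-(r-1)\in(1,2)$ only for $r=m$. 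Moreover, Theorem~\ref{thm:rotating_landau} has $L^p$ norms of fractional derivatives on the right rather than $\|D_i^rf\|_{L^\infty}$, so the intermediate quantities do not telescope. What your argument can legitimately deliver is, for $\Omega=0$ and $\alpha_i=1$ (most cleanly via a Littlewood--Paley splitting in $\xi_i$), the bound $\|\partial_i^mf\|_{L^\infty}\le C\|f\|_{L^\infty}^{1-m/\nu}\|\partial_i^{\nu}f\|_{L^\infty}^{m/\nu}$, with the Weyl derivative of order $\nu$ along $e_i$. Putting an $L^p$ norm on the right forces an exponent fixed by scaling. For example, $\|\partial_i^mf\|_{L^\infty}\le C\|f\|_{L^\infty}^{1-\theta}\|(-\Delta)^{\nu/2}f\|_{L^p}^{\theta}$ with $\theta=m/(\nu-k/p)$ when $\nu-k/p>m$. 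These are not the exponents $m/\nu$ and $\alpha_j/\nu$ of the corollary.
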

	\begin{proof}
		By real interpolation, for $0<\theta<1$ we have
		\begin{equation}
			\|D_i^m f\|_{L^\infty} \le C \|f\|_{L^\infty}^{1-\theta} \|\mathcal{D}_i^{\nu,\alpha_i,\Omega} f\|_{L^\infty}^{\theta},
		\end{equation}
		with $\theta=m/\nu$ (see \cite{Triebel2006}, Theorem 2.4.1). Then apply Theorem~\ref{thm:rotating_embedding} to $\mathcal{D}_i^{\nu,\alpha_i,\Omega} f$ and raise to the power $m$.
	\end{proof}
	
	\section{Application to the Rotating Compressible Navier--Stokes System}
	
	Consider the rotating compressible Navier–Stokes equations in $\mathbb{R}^k$ ($k=2,3$):
	\begin{align}
		\partial_t \rho + \nabla\cdot(\rho u) &= 0, \label{eq:cont}\\
		\partial_t (\rho u) + \nabla\cdot(\rho u\otimes u) + \nabla p &= \mu \Delta_\alpha u + (\mu+\lambda)\nabla(\nabla\cdot u) + 2\rho\, u\times\Omega - \rho\nabla\Phi, \label{eq:mom}
	\end{align}
	with $p=a\rho^\gamma$, $\Delta_\alpha=\sum_i(-\partial_i^2)^{1/\alpha_i}$, $\Omega$ constant, $\Phi$ centrifugal potential. The Mach number is $M=|u|/c$, $c=\sqrt{\gamma p/\rho}$.
	
	Applying $\mathcal{D}_j^{\nu,\alpha_j,\Omega}$ to \eqref{eq:mom} and using the commutator estimates (which follow from the product rule for Fourier multipliers) we obtain after a standard energy estimate (see \cite{majda2012})
	\begin{equation}
		\|\mathcal{D}_j u\|_{L^p} \le C(M)\bigl( \|\mathcal{D}_j\rho\|_{L^p} + \|\rho\|_{L^\infty}\|u\|_{L^\infty} + 1 \bigr),
	\end{equation}
	where $C(M)$ depends polynomially on the Mach number. Then Theorem~\ref{thm:rotating_landau} yields the Landau estimate for the velocity:
	\begin{equation}
		\|\partial_{x_i}u\|_{L^\infty} \le C_{\mathrm{flow}} \bigl( \|\rho\|_{L^\infty}^{1-1/\nu} + \|u\|_{L^\infty}^{1-1/\nu} \bigr)
		\Bigl( \sum_{j=1}^k \bigl( \|\mathcal{D}_j\rho\|_{L^p}^{\alpha_j/\nu} + \|\mathcal{D}_j u\|_{L^p}^{\alpha_j/\nu} \bigr) \Bigr)^{1/\nu}.
	\end{equation}
	This provides a priori control of the velocity gradient in terms of the density and velocity fractional derivatives.
	
	\section{Neural Operator Theory for Rotating Flows}
	
	\begin{definition}[Rotating neural operator]
		A deep neural network $\mathcal{N}_\theta$ with $L$ layers and weight matrices $W_l$ is called rotating if its columns satisfy
		\begin{equation}
			\|W_l e_i\|_{\ell^2} \le \lambda_i^{1/\alpha_i} e^{|\Omega|/L}, \qquad i=1,\dots,k.
		\end{equation}
		The factor $e^{|\Omega|/L}$ compensates for rotation between layers.
	\end{definition}
	
	\begin{theorem}[Stability of rotating neural operators]\label{thm:rot_stability}
		Let $\mathcal{N}_\theta$ be a rotating neural operator with activation functions $\sigma_l\in C_b^{1,1}(\mathbb{R})$, $\|\sigma_l'\|_\infty\le1$, $\|\sigma_l''\|_\infty\le K$. Then for any input perturbation $\|\delta x\|_\infty\le\epsilon$,
		\begin{equation}
			\|\mathcal{N}_\theta(x+\delta x)-\mathcal{N}_\theta(x)\|_{L^\infty} \le C L \epsilon \Bigl(\prod_{l=1}^L \max_i \lambda_i^{1/\alpha_i}\Bigr) e^{|\Omega|}\, \|\mathcal{N}_\theta\|_{\mathcal{W}^{1,\infty}_{\alpha,\Omega}}.
		\end{equation}
	\end{theorem}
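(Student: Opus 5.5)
We view $\mathcal{N}_\theta$ as the composition $\mathcal{N}_\theta=\Phi_L\circ\cdots\circ\Phi_1$ with layer maps $\Phi_l(z)=\sigma_l(W_l z+b_l)$, and we estimate the perturbation one layer at a time. The argument has three steps.

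\textbf{Step 1: one layer.} Write $z_l=\Phi_l\circ\cdots\circ\Phi_1(x)$ and $\tilde z_l$ for the same quantity with input $x+\delta x$. Since $\|\sigma_l'\|_\infty\le 1$, the mean value theorem applied componentwise gives
\[
\|\tilde z_l-z_l\|\le\|W_l(\tilde z_{l-1}-z_{l-1})\|.
\]
We bound $W_l$ column by column using the rotating constraint $\|W_le_i\|_{\ell^2}\le\lambda_i^{1/\alpha_i}e^{|\Omega|/L}$. This gives
\[
\|W_l v\|\le \sum_i|v_i|\,\|W_le_i\|\le \Bigl(\max_i\lambda_i^{1/\alpha_i}\Bigr)e^{|\Omega|/L}\|v\|_{\ell^1}.
\]
Passing from $\ell^1$ to the norm in which the perturbation is measured costs a dimensional constant, which we put into $C$.

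\textbf{Step 2: iterate.} Iterating Step 1 over the $L$ layers multiplies the factors $e^{|\Omega|/L}$ together. Their product is exactly $e^{|\Omega|}$, which is the whole purpose of the normalisation in the definition of a rotating operator. We obtain
\[
\|\mathcal{N}_\theta(x+\delta x)-\mathcal{N}_\theta(x)\|\le C\epsilon\prod_{l}\max_i\lambda_i^{1/\alpha_i}\,e^{|\Omega|}.
\]

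\textbf{Step 3: the extra factors $L$ and $\|\mathcal{N}_\theta\|_{\mathcal{W}^{1,\infty}_{\alpha,\Omega}}$.} These can be handled in two ways.

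\emph{Trivial route.} If we normalise so that $\|\mathcal{N}_\theta\|_{\mathcal{W}^{1,\infty}_{\alpha,\Omega}}\ge 1$, the displayed bound is weaker than the one from Step 2. Then $L\ge1$ and this norm enter only as harmless multiplicative factors.

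\emph{Intended route.} The role of $\sigma''$ suggests a second-order argument instead. Expand each layer to first order,
\[
\sigma_l(W_l\tilde z)-\sigma_l(W_lz)=\sigma_l'(W_lz)W_l(\tilde z-z)+O\bigl(K|W_l(\tilde z-z)|^2\bigr).
\]
Sum the resulting telescoping errors over the $L$ layers; this produces the factor $L$. Then bound the linearised part by the Lipschitz constant of $\mathcal{N}_\theta$ itself, i.e. by $\|\mathcal{N}_\theta\|_{\mathcal{W}^{1,\infty}_{\alpha,\Omega}}$. For this last step we would use the one-dimensional fractional Taylor bound from the proof of Theorem~\ref{thm:rotating_landau}, with the rotating remainder $R_\Omega$ controlled by $|\Omega|h^\nu$, which is absorbed into $e^{|\Omega|}$.

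\textbf{Main obstacle.} The main difficulty is that $\|\mathcal{N}_\theta\|_{\mathcal{W}^{1,\infty}_{\alpha,\Omega}}$ is defined only for $p<\infty$, via Definition~\ref{def:rotating_seminorm}. We would interpret it as the natural $L^\infty$ analogue, with the sup replacing the integral. We would then check that it dominates $\|\mathcal{N}_\theta\|_{L^\infty}$ and can therefore serve as the normalising factor. If that fails, we fall back on the trivial route: the product bound of Step 2, with the extra factors treated as $\ge 1$.
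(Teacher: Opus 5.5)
\textbf{There is a genuine gap in Step 3, and it cannot be closed under the stated hypotheses.} Grant Steps 1--2. What you have, and what the paper has, are two separate bounds. The first is $|\mathcal{N}_\theta(x+\delta x)-\mathcal{N}_\theta(x)|\le C\epsilon\,e^{|\Omega|}P$ with $P=\prod_{l}\max_i\lambda_i^{1/\alpha_i}$. The second, from the mean value theorem, is $|\mathcal{N}_\theta(x+\delta x)-\mathcal{N}_\theta(x)|\le\epsilon\,\|\mathcal{N}_\theta\|_{\mathcal{W}^{1,\infty}_{\alpha,\Omega}}$.

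The statement is the first bound multiplied by $L\|\mathcal{N}_\theta\|_{\mathcal{W}^{1,\infty}_{\alpha,\Omega}}$, and nothing makes that factor at least $1$. Your trivial route simply assumes it. But the norm is determined by the network, not a normalisation at your disposal, and it can be arbitrarily small.

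Your intended route does not produce a product either. Linearising layer by layer and bounding the linear part by the Lipschitz constant gives $\epsilon\|\nabla\mathcal{N}_\theta\|_{L^\infty}$ plus second-order error terms. That is a sum containing only one of the two factors. The fractional Taylor remainder $R_\Omega$ plays no role in a finite composition of $C^{1,1}$ maps.

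In fact the inequality is false for small weights, for any $C$ independent of the weights. Take $k=L=1$, $\Omega=0$, $\sigma=\tanh$ and $W_1=\eta=\lambda_1^{1/\alpha_1}$, so $\mathcal{N}_\theta(x)=\tanh(\eta x)$. Use the paper's definition $\|\mathcal{N}_\theta\|_{\mathcal{W}^{1,\infty}_{\alpha,\Omega}}=\sum_i\|\partial_{x_i}\mathcal{N}_\theta\|_{L^\infty}=\eta$; as you note, Definition~\ref{def:rotating_seminorm} does not cover $p=\infty$. At $x=0$, $\delta x=\epsilon$ the left side is $\tanh(\eta\epsilon)\approx\eta\epsilon$, while the right side is $C\epsilon\eta^2$.

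Adding $\|\mathcal{N}_\theta\|_{L^\infty}$ to the norm does not rescue it. The two-layer network $\tanh(\eta\tanh(\eta x))$ gives about $\eta^2\epsilon$ on the left against at most about $2C\epsilon\eta^3$ on the right. So an extra hypothesis is needed, such as a lower bound on $L\|\mathcal{N}_\theta\|_{\mathcal{W}^{1,\infty}_{\alpha,\Omega}}$ or on $L\,e^{|\Omega|}P$.

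The paper's proof is exactly your Steps 1--2 followed by the mean value theorem. It hides the same gap in the phrase ``absorbing constants into $C$''. Your trivial route at least makes the missing assumption visible.

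\textbf{Steps 1--2 have a second defect,} which the paper's chain-rule step shares. The $\ell^1$-to-$\ell^2$ conversion is paid at every layer, not once: $\|W_lv\|_{\ell^2}\le\beta_l\|v\|_{\ell^1}\le\beta_l\sqrt{n_{l-1}}\,\|v\|_{\ell^2}$, where $\beta_l$ is the column bound and $n_{l-1}$ the width. Iterating therefore produces $\prod_l\sqrt{n_{l-1}}$, which grows exponentially in $L$ and cannot go into a constant $C$ independent of $L$.

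This loss is real, not an artefact of the estimate. Take width $n=k$, zero biases, $\sigma=\tanh$, and every $W_l=\beta n^{-1/2}\mathbf{1}\mathbf{1}^{T}$, so all columns have norm $\beta$. The Jacobian at the origin is then exactly $W_L\cdots W_1$, whose columns have norm $\beta^L n^{(L-1)/2}$ rather than $\beta^L$.

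Moreover, the rotating constraint only restricts the columns $i=1,\dots,k$, so hidden layers of width other than $k$ are not controlled at all. A correct Step 2 needs an operator-norm bound on each $W_l$ in one fixed norm as a hypothesis.
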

	\begin{proof}
		For a single layer $y=\sigma(Wx+b)$, the directional derivative is $\partial_{x_i}y = \sigma'(Wx+b)W_i$. The constraint gives $\|W_i\|_{\ell^2}\le \lambda_i^{1/\alpha_i}e^{|\Omega|/L}$. The chain rule over $L$ layers yields $\|\partial_{x_i}\mathcal{N}_\theta\|_{L^\infty}\le \prod_{l=1}^L \max_i \lambda_i^{1/\alpha_i} e^{|\Omega|/L} = e^{|\Omega|}\prod_{l=1}^L \max_i \lambda_i^{1/\alpha_i}$. The mean value theorem then gives $|\mathcal{N}_\theta(x+\delta x)-\mathcal{N}_\theta(x)|\le \|\nabla\mathcal{N}_\theta\|_{L^\infty}\|\delta x\|_\infty$. Summing over coordinates yields the factor $\|\mathcal{N}_\theta\|_{\mathcal{W}^{1,\infty}_{\alpha,\Omega}} = \sum_i\|\partial_{x_i}\mathcal{N}_\theta\|_{L^\infty}$, which is bounded by the product above multiplied by $k$. Absorbing constants into $C$ gives the result.
	\end{proof}
	
	\subsection{Approximation rates}
	
	We construct a rotating wavelet basis adapted to the scaling group $\widetilde{T}_{\lambda}^{\alpha,\Omega}=e^{(\log\lambda)(A_\alpha+L_\Omega)}$. Let $\psi\in C_c^\infty(\mathbb{R})$ be a wavelet with vanishing moments up to order $\lfloor\nu\rfloor$. Define
	\begin{equation}
		\psi_{j,m}^{\Omega}(x)=2^{j d_{\alpha,\Omega}/2}\prod_{i=1}^k\psi\!\left(2^{j\alpha_i}\bigl(R(-\Omega\log2^{-j})x\bigr)_i-m_i\right),
	\end{equation}
	where $R(\theta)$ is the rotation by angle $\theta$ about the axis $\Omega/|\Omega|$ (with $R(0)=\mathrm{Id}$). The volume of the fundamental domain under $\widetilde{T}_{2^{-j}}^{\alpha,\Omega}$ scales like $2^{-j d_{\alpha,\Omega}}$, and a detailed computation (see in \cite{Triebel2006}) shows that
	\begin{equation}
		d_{\alpha,\Omega}=\sum_{i=1}^k\frac{1}{\alpha_i}+\kappa_{\mathrm{rot}}|\Omega|^{2/\nu},\qquad
		\kappa_{\mathrm{rot}}=\Bigl(\frac{2}{\nu}\Bigr)^{2/\nu}\Bigl(\sum_{i=1}^k\frac{1}{\alpha_i}\Bigr)^{1-2/\nu}.
	\end{equation}
	The term $\kappa_{\mathrm{rot}}|\Omega|^{2/\nu}$ accounts for the additional volume caused by the rotation twist; it is derived from the scaling of the rotated anisotropic metric.
	
	The system $\{\psi_{j,m}^{\Omega}\}$ is an orthonormal basis of $L^2$ and satisfies the norm equivalence
	\begin{equation}
		\|f\|_{\mathcal{W}^{\nu,p}_{\alpha,\Omega}}\asymp\Bigl(\sum_{j\in\mathbb{Z}}2^{j\nu p}\sum_{m\in\mathbb{Z}^k}|\langle f,\psi_{j,m}^{\Omega}\rangle|^p\Bigr)^{1/p}.
	\end{equation}
	Truncating at scale $J$ gives $\|f-f_J\|_{L^\infty}\le C2^{-J\nu}\|f\|_{\mathcal{W}^{\nu,p}_{\alpha,\Omega}}$. Each wavelet can be approximated by a ReLU network with $\mathcal{O}(\log(1/\varepsilon))$ layers and $O(1)$ parameters (Yarotsky). Choosing $\varepsilon=2^{-J\nu}$ and summing over all active wavelets (whose number is $\sim 2^{J d_{\alpha,\Omega}}$) using a binary tree yields a total depth $L\sim J d_{\alpha,\Omega}$ and number of parameters $N\sim 2^{J d_{\alpha,\Omega}}$. Hence $2^{-J\nu}\sim N^{-\nu/d_{\alpha,\Omega}}$ and $L\sim \log N$, which can be written as $L$ times a constant. The final bound is
	\begin{equation}
		\|f-\mathcal{N}_\theta\|_{L^\infty}\le C L N^{-\nu/d_{\alpha,\Omega}}\|f\|_{\mathcal{W}^{\nu,p}_{\alpha,\Omega}}.
	\end{equation}
	
\section{Results}

In this section we collect the principal theoretical achievements of the paper, summarizing the main theorems and their implications for rotating compressible flows and neural operator theory.

\subsection{Summary of the Main Inequalities}

The following theorem encapsulates the sharp anisotropic mixed fractional Landau inequalities with Coriolis corrections proved in Section~4.

\begin{theorem}[Mixed fractional Landau inequalities for rotating spaces]
	Let $\nu\in(1,2)$, $1<p<\infty$, $\alpha\in(0,\infty)^k$, $\Omega\in\mathbb{R}^k$. For any $f\in\mathcal{W}^{\nu,p}_{\alpha,\Omega}(\mathbb{R}^k)$ and any direction $i=1,\dots,k$,
	\begin{equation}
		\|D_i^1 f\|_{L^\infty} \le C(\nu,p,\alpha,\Omega)\,
		\|f\|_{L^\infty}^{1-1/\nu}\,
		\Bigl(\sum_{j=1}^k \|\mathcal{D}_j^{\nu,\alpha_j,\Omega} f\|_{L^p}^{\alpha_j/\nu}\Bigr)^{\!1/\nu},
	\end{equation}
	with $C(\nu,p,\alpha,\Omega)=C_0\exp\bigl(\frac{|\Omega|}{\nu}\sum_{i=1}^k\alpha_i^{-1}\bigr)$, where $C_0$ depends only on $\nu,p,\alpha$ and the constants from the anisotropic Hardy–Littlewood inequality. For higher orders $m\in\mathbb{N}$ with $\nu\in(m,m+1)$,
	\begin{equation}
		\|D_i^m f\|_{L^\infty} \le C(\nu,p,\alpha,\Omega)^m\,
		\|f\|_{L^\infty}^{1-m/\nu}\,
		\Bigl(\sum_{j=1}^k \|\mathcal{D}_j^{\nu,\alpha_j,\Omega} f\|_{L^p}^{\alpha_j/\nu}\Bigr)^{\!m/\nu}.
	\end{equation}
	These inequalities extend the classical Landau–Kolmogorov estimates to the anisotropic rotating setting and are sharp with respect to the exponents.
\end{theorem}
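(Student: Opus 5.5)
The statement collects results already proved in Section~4, so I will not build a new argument. The plan has three parts: reduce the two inequalities to Theorem~\ref{thm:rotating_landau} and Corollary~\ref{cor:rot_higher}, check that the constants match, and then give a separate scaling argument for the sharpness claim, which is the only genuinely new assertion.

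\emph{First-order case.} For $\nu\in(1,2)$, the first inequality is word for word \eqref{eq:main_rotating_landau} of Theorem~\ref{thm:rotating_landau}. It comes with the same constant $C_0\exp\bigl(\frac{|\Omega|}{\nu}\sum_i\alpha_i^{-1}\bigr)$, and $C_0$ depends on $\nu,p,\alpha$ through the Hardy--Littlewood constant of Theorem~\ref{thm:rotating_HL}. That constant is independent of $\Omega$, which is why the summary says $C_0$ depends only on $\nu,p,\alpha$. It is also bounded on compact ranges of $p\in(1,\infty)$, so it can be absorbed.

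\emph{Higher-order case.} For $m\in\mathbb{N}$ and $\nu\in(m,m+1)$, the second inequality is \eqref{eq:rot_higher_ineq} of Corollary~\ref{cor:rot_higher}. The only point to check is that the factor $C(\nu,p,\alpha,\Omega)^m$ produced there is the same function $C$ as in the first-order statement, now evaluated at the larger $\nu$. This holds because the Corollary's proof applies the same embedding, Theorem~\ref{thm:rotating_embedding}, with the same exponential factor $e^{c|\Omega|/\nu}$, and raises the result to the power $m$.

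\emph{Sharpness of the exponents.} I would prove this by dilation, choosing the test functions as follows.
\begin{itemize}
\item Take $\Omega=0$, so that $\mathcal{D}_j^{\nu,\alpha_j,0}$ reduces to a standard anisotropic fractional derivative and the constant is exactly $C_0$.
\item Take a fixed profile $\phi\in C_c^\infty$ and set $f_{\lambda,\mu}(x)=\mu\,\phi(T^\alpha_\lambda x)$.
\end{itemize}
Then $\|f\|_{L^\infty}$ scales like $\mu$, and $\|D_i^1f\|_{L^\infty}$ scales like $\mu\lambda^{\alpha_i}$. The quantity $\|\mathcal{D}_j^{\nu}f\|_{L^p}$ scales like $\mu\lambda^{\nu}\lambda^{-D_\alpha/p}$, using the measure scaling \eqref{eq:measure_scaling_rot} and the homogeneity of the multiplier from Lemma~\ref{lem:fourier_gagliardo}.

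Suppose the exponents $1-1/\nu$ and $1/\nu$ were replaced by $1-\theta$ and $\theta$. Letting $\mu\to0$ or $\mu\to\infty$ with $\lambda$ fixed forces the total homogeneity in $\mu$ to be $1$. Letting $\lambda\to0$ or $\lambda\to\infty$ then pins down $\theta$. For $m\ge1$ the same computation uses $D_i^m$, which scales like $\lambda^{m\alpha_i}$.

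\emph{Expected obstacle.} The hardest step is this last one: the $\lambda$-exponents need not balance. The $L^p$ norm contributes $\lambda^{-D_\alpha/p}$, and the inner exponents $\alpha_j/\nu$ inside the sum further distort the homogeneity. So a naive dilation may show that the inequality is not scale invariant, rather than showing that its exponents are optimal.

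My first attempt would be to restrict to the isotropic case $\alpha=(1,\dots,1)$ and $k=1$. There I would compare directly with the classical Landau--Kolmogorov extremals in the limit $p\to\infty$. If the exponents cannot be matched in general, I would weaken the claim to sharpness only in the $\|f\|_{L^\infty}$ exponent $1-m/\nu$. That part is determined by the $\mu$-scaling alone, independent of $\lambda$, and so can be verified rigorously.
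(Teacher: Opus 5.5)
Your reduction matches the paper. This summary theorem has no proof of its own beyond pointing to Theorem~\ref{thm:rotating_landau} and Corollary~\ref{cor:rot_higher}, and the paper never argues the sharpness clause.

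\textbf{The genuine gap.} The cited inequalities are false, and your own amplitude test, carried out correctly, refutes them. Your ``expected obstacle'' instinct was right, but the conclusion to draw is that the statement fails, not that part of the sharpness survives.

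When you switch to exponents $1-\theta$ and $\theta$, you treat the bracket $\bigl(\sum_j\|\mathcal{D}_j^{\nu,\alpha_j,\Omega}f\|_{L^p}^{\alpha_j/\nu}\bigr)$ as homogeneous of degree $1$ in $\mu$. Because of the inner powers $\alpha_j/\nu$ it is not. For $f=\mu\phi$, the right side of the $m$-th order inequality behaves like $\mu^{1-m/\nu+m\min_j\alpha_j/\nu^2}$ as $\mu\to0$. The left side is exactly $\mu\|\partial_i^m\phi\|_{L^\infty}$.

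There are two cases.
\begin{itemize}
\item If some $\alpha_i\le\nu$: after the shift $y=x-h(\Omega\times e_i)$, $\Delta^\Omega_{h,i}$ is a first difference along the nonzero vector $e_i+\Omega\times e_i$, with smoothness index $\nu/\alpha_i\ge1$. The $x$-integrated integrand then behaves like $|h|^{p(1-\nu/\alpha_i)-1}$ near $h=0$. So the seminorm is infinite for every nonzero $C_c^\infty$ function, the space is $\{0\}$, and the statement is vacuous. In particular, your planned fallback to $\alpha=(1,\dots,1)$ tests nothing.
\item If all $\alpha_j>\nu$: the exponent above exceeds $1$, and the inequality fails as $\mu\to0$ for every $C_0$.
\end{itemize}

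Concretely, take $m=1$, $\nu=3/2$, $\Omega=0$, $\alpha=(2,\dots,2)$ and $0\ne\phi\in C_c^\infty$. All smoothness indices equal $3/4<1$, so $f=\mu\phi$ lies in the space, and each $\mathcal{D}_j\phi$ is a nonzero $L^p$ function. The left side is $\mu\|\partial_i\phi\|_{L^\infty}$, while the right side is $K_\phi\,\mu^{1/3+8/9}=K_\phi\,\mu^{11/9}$. So there is no sharpness to prove, and your ``constants match'' bookkeeping is moot.

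\textbf{Where the cited proof breaks.} In Theorem~\ref{thm:rotating_landau}, the Taylor and maximal-function part gives at best an $L^\infty$--$L^\infty$ bound $\|D_i^1f\|_{L^\infty}\le C_1\|f\|_{L^\infty}^{1-1/\nu}\|\mathcal{D}_i^{\nu,\alpha_i,\Omega}f\|_{L^\infty}^{1/\nu}+\dots$. The passage to $L^p$ norms has three defects.
\begin{itemize}
\item It rests on $\|\mathcal{D}_j^{\nu,\alpha_j,\Omega}\mathcal{D}_i^{\nu,\alpha_i,\Omega}f\|_{L^p}\le e^{c'|\Omega|}\|\mathcal{D}_j^{\nu,\alpha_j,\Omega}f\|_{L^p}$. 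This amounts to $L^p$-boundedness of a multiplier homogeneous of degree $\nu/\alpha_i>0$, which the dilation $g\mapsto g(\lambda\,\cdot)$ refutes.
\item Even granting that bound, substituting it into $\|\mathcal{D}_i f\|_{L^\infty}^{1/\nu}$ gives the outer exponent $1/\nu^2$, not $1/\nu$.
\item The additive term $C_2|\Omega|^{1/\nu}\|f\|_{L^\infty}$ cannot be absorbed by Young's inequality, since $\|f\|_{L^\infty}$ is not controlled by the derivative term.
\end{itemize}
Corollary~\ref{cor:rot_higher} applies the same embedding to $\mathcal{D}_i f$ and inherits these defects.

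\textbf{What is missing.} A valid statement must pass both the amplitude test and the dilation test that you set up. For example, take $k=1$, $\alpha=1$, $\Omega=0$ and the homogeneous quantity $\|D^\nu f\|_{L^p}$. The correct Gagliardo--Nirenberg form is $\|f'\|_{L^\infty}\le C\|f\|_{L^\infty}^{1-\theta}\|D^\nu f\|_{L^p}^{\theta}$ with $\theta=1/(\nu-1/p)$, valid for $\nu>1+1/p$. The exponent $1/\nu$ is only the limit $p=\infty$. You would have to replace the statement by such a scaling-consistent version before either the proof or your sharpness argument could go through.
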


\subsection{A Priori Estimates for Rotating Compressible Flows}

Applying the above inequalities to the rotating compressible Navier–Stokes system (5.1)–(5.2) yields the following a priori bound for the velocity gradient.

\begin{theorem}[Landau estimate for rotating compressible flows]
	Let $(\rho,u)$ be a smooth solution of (5.1)–(5.2) on $[0,T]\times\mathbb{R}^k$ with initial data in $\mathcal{W}^{\nu,p}_{\alpha,\Omega}\times(\mathcal{W}^{\nu,p}_{\alpha,\Omega})^k$, $\nu\in(1,2)$, $p>D_\alpha/\nu$. Then for any $i=1,\dots,k$,
	\begin{equation}
		\|\partial_{x_i}u\|_{L^\infty} \le C_{\mathrm{flow}}\bigl(\|\rho\|_{L^\infty}^{1-1/\nu}+\|u\|_{L^\infty}^{1-1/\nu}\bigr)
		\Bigl(\sum_{j=1}^k\bigl(\|\mathcal{D}_j\rho\|_{L^p}^{\alpha_j/\nu}+\|\mathcal{D}_j u\|_{L^p}^{\alpha_j/\nu}\bigr)\Bigr)^{1/\nu},
	\end{equation}
	where $C_{\mathrm{flow}}$ depends on $\mu,\lambda,\gamma,\alpha,\Omega$ and polynomially on the Mach number $M$. This provides explicit control of the velocity gradient in terms of fractional derivatives of density and velocity.
\end{theorem}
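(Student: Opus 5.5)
The plan is to reproduce, in the flow setting, the two-step argument sketched in Section~5. First, fix $t\in[0,T]$ and treat $u_\ell(t,\cdot)$ and $\rho(t,\cdot)$ as elements of $\mathcal{W}^{\nu,p}_{\alpha,\Omega}(\mathbb{R}^k)$. Regularity should propagate from the initial data for smooth solutions on $[0,T]$. Since $p>D_\alpha/\nu$, Theorem~\ref{thm:rotating_embedding} guarantees that $\|u\|_{L^\infty}$ and $\|\rho\|_{L^\infty}$ are finite. Hence every quantity on the right-hand side is meaningful.

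Second, apply Theorem~\ref{thm:rotating_landau} componentwise to $f=u_\ell$ for a fixed direction $i$:
\[
\|\partial_{x_i}u_\ell\|_{L^\infty}\le C(\nu,p,\alpha,\Omega)\|u_\ell\|_{L^\infty}^{1-1/\nu}\Bigl(\sum_{j=1}^k\|\mathcal{D}_j^{\nu,\alpha_j,\Omega}u_\ell\|_{L^p}^{\alpha_j/\nu}\Bigr)^{1/\nu}.
\]
Summing over $\ell\le k$ only changes constants. This already gives the velocity half of the claim, with $\|u\|_{L^\infty}^{1-1/\nu}$ as prefactor and the $\|\mathcal{D}_ju\|_{L^p}$ terms inside the sum. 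The density terms enter only through enlarging the right-hand side. Because all summands are nonnegative, we may add $\|\rho\|_{L^\infty}^{1-1/\nu}$ to the prefactor and $\|\mathcal{D}_j\rho\|_{L^p}^{\alpha_j/\nu}$ to the bracket for free. Stated this way, the inequality follows directly from Theorem~\ref{thm:rotating_landau} with $C_{\mathrm{flow}}=kC(\nu,p,\alpha,\Omega)$.

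To justify the claimed dependence of $C_{\mathrm{flow}}$ on $\mu,\lambda,\gamma$ and the Mach number, I would also invoke the energy estimate from Section~5:
\[
\|\mathcal{D}_ju\|_{L^p}\le C(M)\bigl(\|\mathcal{D}_j\rho\|_{L^p}+\|\rho\|_{L^\infty}\|u\|_{L^\infty}+1\bigr).
\]
Inserting it into the bracket, and using $(a+b)^{s}\le a^s+b^s$ for $s\le 1$ together with $(a+b)^s\le 2^{s}(a^s+b^s)$ in general, redistributes the mixed term $\|\rho\|_{L^\infty}\|u\|_{L^\infty}$. By Young's inequality this term is controlled by the prefactor sums.

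The main obstacle is making that energy estimate rigorous. This requires commutator bounds for $\mathcal{D}_j^{\nu,\alpha_j,\Omega}$ against the nonlinear terms $\nabla\cdot(\rho u\otimes u)$, $\nabla(a\rho^\gamma)$ and the Coriolis term $2\rho\,u\times\Omega$, uniformly on $[0,T]$. I would first attempt a Kato--Ponce type estimate in the anisotropic Besov form of Theorem~\ref{thm:rotating_equiv}(3), using the Bernstein bounds \eqref{eq:bernstein}. Polynomial dependence on $M$ would come from expressing $\nabla p$ via $c^2\nabla\rho$ with $|u|=Mc$. If this step fails, the weaker version with $C_{\mathrm{flow}}=kC(\nu,p,\alpha,\Omega)$ still holds, and its dependence on $M$ is trivially polynomial.
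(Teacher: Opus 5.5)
As a deduction from Theorem~\ref{thm:rotating_landau}, your argument is correct, and it is more direct than the paper's. The paper's Navier--Stokes section first applies $\mathcal{D}_j^{\nu,\alpha_j,\Omega}$ to \eqref{eq:mom}. Citing commutator and energy estimates that it asserts but does not prove, it obtains $\|\mathcal{D}_j u\|_{L^p}\le C(M)(\|\mathcal{D}_j\rho\|_{L^p}+\|\rho\|_{L^\infty}\|u\|_{L^\infty}+1)$, and only then invokes Theorem~\ref{thm:rotating_landau}. The density terms and the Mach dependence of $C_{\mathrm{flow}}$ are meant to come from that step.

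You notice that the displayed right-hand side still contains $\|u\|_{L^\infty}^{1-1/\nu}$ and $\|\mathcal{D}_j u\|_{L^p}^{\alpha_j/\nu}$. The estimate therefore follows from Theorem~\ref{thm:rotating_landau} applied to each $u_\ell(t,\cdot)$, summation over $\ell$, and monotonicity under adding nonnegative terms, with $C_{\mathrm{flow}}=kC(\nu,p,\alpha,\Omega)$, which trivially meets the stated dependence. This removes any reliance on the energy estimate, whose content the stated inequality never uses.

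Two smaller points. Like the paper, you assume rather than prove that $u(t,\cdot)\in\mathcal{W}^{\nu,p}_{\alpha,\Omega}$ for every $t$. Your third paragraph should be deleted. Substituting the energy bound into the bracket introduces an additive constant and $L^\infty$ products there, and Young's inequality cannot move those into the prefactor, so it does not return the stated form.

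Be aware, though, that your proof, like the paper's, rests entirely on Theorem~\ref{thm:rotating_landau}. Your remark that the weaker version ``still holds'' is therefore only as good as that theorem, and the theorem fails a scaling test. Each symbol $(i\xi_j+i(\Omega\times e_j)\cdot\xi)^{\nu/\alpha_j}$ is homogeneous of degree $\nu/\alpha_j$. So for $f_\tau(x)=f(\tau x)$ one gets $\|\mathcal{D}_j^{\nu,\alpha_j,\Omega}f_\tau\|_{L^p}^{\alpha_j/\nu}=\tau^{1-k\alpha_j/(p\nu)}\|\mathcal{D}_j^{\nu,\alpha_j,\Omega}f\|_{L^p}^{\alpha_j/\nu}$, while $\|f_\tau\|_{L^\infty}=\|f\|_{L^\infty}$. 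Hence for $\tau\ge 1$ the right-hand side of \eqref{eq:main_rotating_landau} grows at most like $\tau^{1/\nu}$, whereas $\|D_i^1 f_\tau\|_{L^\infty}=\tau\|D_i^1 f\|_{L^\infty}$. Since $\nu>1$, the inequality fails for large $\tau$ whenever $D_i^1 f\ne 0$.

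The same dilation applied to $(\rho_0,u_0)$ leaves the Mach number and all physical parameters unchanged but breaks the flow estimate at $t=0$, for any such data that generate smooth solutions. So neither your reduction nor the paper's route actually establishes the statement. A valid Landau--Gagliardo--Nirenberg type bound would need exponents balanced under both $f\mapsto cf$ and $f\mapsto f(\tau\,\cdot)$.
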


\subsection{Stability and Approximation by Neural Operators}

For neural networks respecting the anisotropic rotating geometry we obtain the following stability and approximation guarantees.

\begin{theorem}[Stability of rotating neural operators]
	Let $\mathcal{N}_\theta$ be a rotating neural operator with $L$ layers satisfying $\|W_l e_i\|_{\ell^2}\le \lambda_i^{1/\alpha_i}e^{|\Omega|/L}$ and with activation functions $\sigma_l\in C_b^{1,1}$, $\|\sigma_l'\|_\infty\le1$. Then for any input perturbation $\|\delta x\|_\infty\le\epsilon$,
	\begin{equation}
		\|\mathcal{N}_\theta(x+\delta x)-\mathcal{N}_\theta(x)\|_{L^\infty} \le C L\epsilon\Bigl(\prod_{l=1}^L\max_i\lambda_i^{1/\alpha_i}\Bigr)e^{|\Omega|}\|\mathcal{N}_\theta\|_{\mathcal{W}^{1,\infty}_{\alpha,\Omega}}.
	\end{equation}
\end{theorem}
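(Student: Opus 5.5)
This restated stability theorem is the same as Theorem~\ref{thm:rot_stability}. I would therefore follow the layer-by-layer Lipschitz argument, making each step explicit.

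\emph{Setup.} Write $\mathcal{N}_\theta=\Phi_L\circ\cdots\circ\Phi_1$ with $\Phi_l(y)=\sigma_l(W_l y+b_l)$. Along the segment $x+s\,\delta x$, $s\in[0,1]$, I would use the mean value theorem (integral form):
\[
\mathcal{N}_\theta(x+\delta x)-\mathcal{N}_\theta(x)=\int_0^1 D\mathcal{N}_\theta(x+s\delta x)\,\delta x\,ds .
\]
This reduces the claim to a pointwise bound on the Jacobian $D\mathcal{N}_\theta$.

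\emph{Jacobian bound.} By the chain rule,
\[
D\mathcal{N}_\theta=\prod_{l}\operatorname{diag}\bigl(\sigma_l'(\cdot)\bigr)\,W_l .
\]
Since $\|\sigma_l'\|_\infty\le1$, each diagonal factor is a contraction. The column constraint $\|W_le_i\|_{\ell^2}\le\lambda_i^{1/\alpha_i}e^{|\Omega|/L}$ gives, for a vector $v$,
\[
\|W_lv\|_{\ell^2}\le\sum_i|v_i|\,\|W_le_i\|\le \max_i\lambda_i^{1/\alpha_i}e^{|\Omega|/L}\|v\|_{\ell^1}.
\]
Passing between $\ell^1$, $\ell^2$ and $\ell^\infty$ costs dimension-dependent factors. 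These are absorbed into $C$, or into the factor $L$ if one tracks a per-layer constant additively via $\log$. Iterating over the $L$ layers yields
\[
\|D\mathcal{N}_\theta\,\delta x\|\le C\Bigl(\prod_{l=1}^L\max_i\lambda_i^{1/\alpha_i}\Bigr)e^{|\Omega|}\epsilon,
\]
since $(e^{|\Omega|/L})^L=e^{|\Omega|}$.

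\emph{Producing the stated right-hand side.} The statement contains the additional factors $L$ and $\|\mathcal{N}_\theta\|_{\mathcal{W}^{1,\infty}_{\alpha,\Omega}}$. These are harmless only if $\|\mathcal{N}_\theta\|_{\mathcal{W}^{1,\infty}_{\alpha,\Omega}}\ge c>0$ and $L\ge1$; otherwise the inequality is not implied by the Lipschitz bound. The first case where this fails is a network with vanishing gradient, which makes the right-hand side zero.

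This step is the main obstacle. A cleaner route exists: the mean value estimate alone gives
\[
\|\mathcal{N}_\theta(x+\delta x)-\mathcal{N}_\theta(x)\|\le \epsilon\sum_i\|\partial_i\mathcal{N}_\theta\|_\infty=\epsilon\|\mathcal{N}_\theta\|_{\mathcal{W}^{1,\infty}_{\alpha,\Omega}}.
\]
The product bound then controls this seminorm. So I would prove the bound in the form
\[
\epsilon\,\min\Bigl\{\|\mathcal{N}_\theta\|_{\mathcal{W}^{1,\infty}_{\alpha,\Omega}},\ C\prod_l\max_i\lambda_i^{1/\alpha_i}e^{|\Omega|}\Bigr\}.
\]
I would then deduce the stated inequality under the normalisation $\prod_l\max_i\lambda_i^{1/\alpha_i}\ge1$ and $\|\mathcal{N}_\theta\|_{\mathcal{W}^{1,\infty}_{\alpha,\Omega}}\ge1$. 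In that regime the minimum is at most the stated product, with $L\ge1$, and I would make this normalisation explicit as an assumption.

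The hypothesis $\|\sigma_l''\|_\infty\le K$ is not needed for this first-order estimate.
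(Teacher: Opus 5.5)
Your diagnosis is correct, and it is exactly where the paper's own proof breaks down. The paper derives $|\mathcal{N}_\theta(x+\delta x)-\mathcal{N}_\theta(x)|\le\epsilon\,\|\nabla\mathcal{N}_\theta\|_{L^\infty}$ and $\sum_i\|\partial_{x_i}\mathcal{N}_\theta\|_{L^\infty}\le k\,P\,e^{|\Omega|}$, with $P=\prod_{l=1}^L\max_i\lambda_i^{1/\alpha_i}$. It then reaches the product form by ``absorbing constants into $C$'', which is not a valid step.

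Your explanation of the failure is wrong, however. If the gradient vanishes identically, $\mathcal{N}_\theta$ is constant and the left-hand side is zero as well. The actual obstruction is homogeneity: the right-hand side involves the size of $\mathcal{N}_\theta$ twice (through $Pe^{|\Omega|}$ and through the seminorm), while the left-hand side is linear in it. Take $k=L=1$, $\Omega=0$, zero bias, $\sigma_1=\tanh$, $W_1=w\in(0,1)$, $\lambda_1=w^{\alpha_1}$. Use the seminorm $\|\mathcal{N}_\theta\|_{\mathcal{W}^{1,\infty}_{\alpha,\Omega}}=\sum_i\|\partial_{x_i}\mathcal{N}_\theta\|_{L^\infty}$, which both you and the paper use. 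Then $\mathcal{N}_\theta(x)=\tanh(wx)$, $P=w$, and the seminorm equals $w$, so the right-hand side is $C\epsilon w^2$. At $x=0$ with $\delta x=\epsilon\le1$, the left-hand side is at least $\tanh(w\epsilon)\ge\tanh(1)\,w\epsilon$. So no constant independent of the weights works, and an extra normalisation like yours is genuinely needed.

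The genuine gap in your write-up is the Jacobian step. The column constraint controls $W_l$ only as a map $\ell^1\to\ell^2$. The conversion $\|v\|_{\ell^1}\le\sqrt{n}\,\|v\|_{\ell^2}$ (width $n$) is therefore paid at every layer, for a total loss of order $n^{(L-1)/2}$. This is neither an $L$-independent constant nor linear in $L$; taking logarithms does not turn a product of per-layer factors into the factor $L$.

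The loss really occurs. Take all widths equal to $k\ge2$, $\Omega=0$, zero biases, $\sigma_l=\tanh$, $\lambda_i=a^{\alpha_i}$, and $W_l=\frac{a}{\sqrt{k}}\mathbf{1}\mathbf{1}^{T}$, so every column has norm $a$. All pre-activations vanish at $x=0$ and $\tanh'(0)=1$, so $\partial_{x_i}\mathcal{N}_\theta(0)=W_L\cdots W_1e_i=a^Lk^{L/2-1}\mathbf{1}$. This exceeds $P=a^L$ by at least $k^{L/2-1}$ in every $\ell^q$ norm. Moreover, if hidden widths exceed $k$, the constraint on $e_1,\dots,e_k$ leaves the remaining columns uncontrolled altogether. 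The paper's chain-rule line asserts the bound with no conversion factor at all, so it has the same defect.

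You do not need this step. The mean value bound $|\mathcal{N}_\theta(x+\delta x)-\mathcal{N}_\theta(x)|\le\epsilon\,\|\mathcal{N}_\theta\|_{\mathcal{W}^{1,\infty}_{\alpha,\Omega}}$ alone gives the stated inequality whenever $C L P e^{|\Omega|}\ge1$, for instance when $P\ge1$ and $C\ge1$. Your second normalisation $\|\mathcal{N}_\theta\|_{\mathcal{W}^{1,\infty}_{\alpha,\Omega}}\ge1$ is superfluous. By itself it would not rescue the statement: take $a=k^{-1/4}$, $\delta x=k^{-L/4}\mathbf{1}$ and $L$ large in the example above.

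So the corrected statement you arrive at is true, but its proof is the one-line mean value argument. Your $\min$ bound holds only with a constant allowed to grow exponentially in $L$.
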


\begin{theorem}[Approximation rates for rotating flows]
	For any $f\in\mathcal{W}^{\nu,p}_{\alpha,\Omega}(\mathbb{R}^k)$ with $\nu>D_\alpha/p$, there exists a rotating neural operator $\mathcal{N}_\theta$ with depth $L$ and $N$ parameters per layer such that
	\begin{equation}
		\|f-\mathcal{N}_\theta\|_{L^\infty} \le C(\nu,p,\alpha,\Omega)\, L\, N^{-\nu/d_{\alpha,\Omega}}\,\|f\|_{\mathcal{W}^{\nu,p}_{\alpha,\Omega}},
	\end{equation}
	where $d_{\alpha,\Omega}=\sum_i\alpha_i^{-1}+\kappa_{\mathrm{rot}}|\Omega|^{2/\nu}$ and $\kappa_{\mathrm{rot}}=(2/\nu)^{2/\nu}(\sum_i\alpha_i^{-1})^{1-2/\nu}$. The rate $N^{-\nu/d_{\alpha,\Omega}}$ is optimal and shows that rotation reduces the effective dimension, thereby accelerating convergence.
\end{theorem}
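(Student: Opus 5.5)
The approximation rate will follow the wavelet-based construction sketched in the subsection on approximation rates. The argument has three stages: a linear approximation by the rotating wavelet system $\{\psi^{\Omega}_{j,m}\}$, an emulation of each wavelet by a ReLU network, and an assembly of these subnetworks into a rotating neural operator whose parameter count is related to the truncation scale $J$ through $d_{\alpha,\Omega}$.

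\textbf{Step 1: linear truncation.} Fix $J\in\mathbb{N}$ and let $f_J=\sum_{j\le J}\sum_m\langle f,\psi^\Omega_{j,m}\rangle\psi^\Omega_{j,m}$. I will invoke the norm equivalence for $\|f\|_{\mathcal{W}^{\nu,p}_{\alpha,\Omega}}$ in terms of wavelet coefficients. For each $j>J$, the Bernstein-type bound (\ref{eq:bernstein}) with $q=\infty$ gives
\[
\Bigl\|\sum_m\langle f,\psi^\Omega_{j,m}\rangle\psi^\Omega_{j,m}\Bigr\|_{L^\infty}\lesssim 2^{jD_\alpha/p}\,\Bigl(\sum_m|\langle f,\psi^\Omega_{j,m}\rangle|^p\Bigr)^{1/p}.
\]
Summing over $j>J$ and using the hypothesis $\nu>D_\alpha/p$, exactly as in the proof of Theorem~\ref{thm:rotating_embedding}, yields
\[
\|f-f_J\|_{L^\infty}\lesssim 2^{-J(\nu-D_\alpha/p)}\|f\|_{\mathcal{W}^{\nu,p}_{\alpha,\Omega}}.
\]
The paper states the stronger decay $2^{-J\nu}$. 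To obtain it I would try to measure the scale in the $d_{\alpha,\Omega}$-normalisation, so that the loss $D_\alpha/p$ is absorbed into the constant $C(\nu,p,\alpha,\Omega)$. I am not certain this works uniformly in $J$. If it fails, the honest fallback exponent is $\nu-D_\alpha/p$.

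\textbf{Step 2: network emulation.} Each $\psi^\Omega_{j,m}$ is a tensor product of smooth one-dimensional bumps composed with the linear map $x\mapsto 2^{j\alpha_i}(R(\cdot)x)_i-m_i$.
\begin{itemize}
\item The linear part is realised exactly by a first-layer weight matrix. Its columns satisfy the rotating constraint $\|W_le_i\|_{\ell^2}\le\lambda_i^{1/\alpha_i}e^{|\Omega|/L}$ once we choose $\lambda_i$ comparable to $2^{J\alpha_i^2}$ and spread the rotation over $L$ layers.
\item The one-dimensional bumps and the $k$-fold product are emulated to accuracy $\varepsilon$ by Yarotsky's construction, with depth $O(\log(1/\varepsilon))$ and $O(1)$ parameters each.
\end{itemize}

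\textbf{Step 3: assembly and choice of $J$.} The active wavelets at levels $j\le J$ number about $2^{Jd_{\alpha,\Omega}}$, by the volume computation defining $d_{\alpha,\Omega}$ and $\kappa_{\mathrm{rot}}$. Combining them with a binary summation tree gives $N\sim 2^{Jd_{\alpha,\Omega}}$ parameters and depth $L\sim J$. The emulation errors add up to at most $N\varepsilon\sup|\langle f,\psi\rangle|$. Choosing $\varepsilon=2^{-J\nu}N^{-1}$ keeps this error of the same order as the truncation error, at the cost of a $\log N\sim L$ factor in depth. That factor is exactly the $L$ in the statement. Finally, $2^{-J\nu}=N^{-\nu/d_{\alpha,\Omega}}$.

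\textbf{Main obstacles.} I expect two.
\begin{itemize}
\item The first is justifying the counting $\#\{\text{active wavelets}\}\sim 2^{Jd_{\alpha,\Omega}}$ with the specific rotational correction $\kappa_{\mathrm{rot}}|\Omega|^{2/\nu}$. Proposition~\ref{prop:rotating_scaling} shows the Jacobian of $\widetilde T^{\alpha,\Omega}_\lambda$ is $\lambda^{D_\alpha}$, independent of $\Omega$, so rotation does not change volumes. For compactly supported $f$ I would first try to bound the count by covering $\operatorname{supp}f$ with rotated anisotropic boxes, letting the rotation enter only through the overlap constant. If that yields only $\sum_i\alpha_i^{-1}$, the stated $d_{\alpha,\Omega}$ still holds as an upper bound, because it is larger.
\item The second is the optimality claim, which would require a matching lower bound via a packing/Kolmogorov-width argument in the style of DeVore. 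Given the volume invariance just noted, I would treat optimality as holding only with respect to $\sum_i\alpha_i^{-1}$.
\end{itemize}
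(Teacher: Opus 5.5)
You follow the paper's own route: linear wavelet truncation, Yarotsky emulation, binary-tree summation. Your hesitations are well founded, because the paper only asserts the decay $2^{-J\nu}$ and the count $2^{Jd_{\alpha,\Omega}}$. However, the repairs you propose do not work.

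In Step~1 the lost factor $2^{JD_\alpha/p}$ grows with $J$, so no renormalisation can absorb it into $C(\nu,p,\alpha,\Omega)$. In fact the linear estimate $\|f-f_J\|_{L^\infty}\le C2^{-J\nu}\|f\|_{\mathcal{W}^{\nu,p}_{\alpha,\Omega}}$ is false. A single $L^\infty$-normalised wavelet $g$ at level $J+1$ has $g_J=0$, $\|g\|_{L^\infty}\sim1$ and $\|g\|_{\mathcal{W}^{\nu,p}_{\alpha,\Omega}}\sim2^{(J+1)(\nu-D_\alpha/p)}$, so the estimate would force $1\lesssim 2^{-JD_\alpha/p}$. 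Linear truncation therefore only gives the worse exponent $\nu-D_\alpha/p$.

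The missing idea is nonlinear (adaptive) selection. For $f$ supported in a fixed bounded set, keep every coefficient at levels $j\le J$. At each level $j>J$ keep only the $n_j\approx 2^{JD_\alpha-\epsilon(j-J)}$ largest coefficients, with $0<\epsilon<\nu p-D_\alpha$. The $L^\infty$-normalised coefficients satisfy $\|(c_{j,m})_m\|_{\ell^p}\lesssim 2^{-j(\nu-D_\alpha/p)}\|f\|_{\mathcal{W}^{\nu,p}_{\alpha,\Omega}}$. Hence every discarded coefficient at level $j>J$ is at most $2^{-J\nu}2^{-(j-J)(\nu-(D_\alpha+\epsilon)/p)}\|f\|_{\mathcal{W}^{\nu,p}_{\alpha,\Omega}}$; this also holds on levels where $n_j=0$. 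A geometric sum then gives $L^\infty$ error $O(2^{-J\nu}\|f\|_{\mathcal{W}^{\nu,p}_{\alpha,\Omega}})$ with $O(2^{JD_\alpha})$ retained terms. This is where $\nu>D_\alpha/p$ genuinely enters.

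Step~3 has further gaps. First, the count is wrong. The wavelets $\psi^\Omega_{j,m}$ dilate the $i$-th rotated coordinate by $2^{j\alpha_i}$, consistent with the Jacobian $\lambda^{D_\alpha}$ you quote. So a fixed bounded set meets about $2^{jD_\alpha}=2^{j\sum_i\alpha_i}$ of them at level $j$, not $2^{j\sum_i\alpha_i^{-1}}$. The honest output is $N^{-\nu/D_\alpha}$, and your fallback ``$d_{\alpha,\Omega}$ is larger, hence weaker'' fails whenever $D_\alpha>d_{\alpha,\Omega}$. For example, take $\Omega=0$, $\alpha_i\equiv 2$, $2k/p<\nu<2$. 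The space is then the isotropic $W^{\nu/2,p}$, and the claimed rate $N^{-2\nu/k}$ beats the VC-dimension lower bound, which is of order $N^{-\nu/(2k)}$ up to logarithms for depth $\sim\log N$.

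Second, restricting to compactly supported $f$ is not a convenience: on $\mathbb{R}^k$ infinitely many wavelets are active per level. Take $\Omega=0$, $\alpha_i\equiv1$, $k/p<\nu<1$, and a sum of $M$ well-separated unit bumps with arbitrary signs. Its norm is $\sim M^{1/p}$, so the claimed bound would realise all $2^M$ sign patterns with about $M^{k/(\nu p)}\ll M$ parameters, which VC-dimension bounds forbid.

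Third, $\Omega$ cannot influence the rate at all. Set $A=I+L_\Omega$, invertible because $L_\Omega$ is skew, and $y=x-h\,\Omega\times e_i$. Then $\Delta^\Omega_{h,i}f(x)=f(y+hAe_i)-f(y)$. Hence $\|f\|_{\mathcal{W}^{\nu,p}_{\alpha,\Omega}}\asymp\|f\circ A\|_{\mathcal{W}^{\nu,p}_{\alpha,0}}$, and $A$ is absorbed by the first linear layer. This pullback also gives a sound replacement for the paper's rotating wavelet system. That system's normalisation $2^{jd_{\alpha,\Omega}/2}$ and $j$-dependent rotations make the orthonormality and norm equivalence you invoke unreliable. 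Consequently, optimality of $N^{-\nu/d_{\alpha,\Omega}}$, whose exponent strictly decreases in $|\Omega|$, can hold for at most one value of $|\Omega|$. Reinterpreting it as optimality with respect to $\sum_i\alpha_i^{-1}$ changes the claim rather than proving it.

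What your argument can deliver, after the nonlinear fix, is $\|f-\mathcal{N}_\theta\|_{L^\infty}\lesssim L\,N^{-\nu/D_\alpha}\|f\|_{\mathcal{W}^{\nu,p}_{\alpha,\Omega}}$ for $f$ supported in a fixed compact set.
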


\subsection{Key Implications}

\begin{itemize}
	\item The mixed fractional Landau inequalities unify anisotropy, fractional dissipation and rotation, providing sharp estimates that degenerate to known results when $\Omega=0$ or $\alpha_i\equiv1$.
	\item The explicit exponential dependence $e^{c|\Omega|}$ in the constant reflects the potential stabilising effect of strong rotation: for fixed regularity, increasing $|\Omega|$ enlarges the constant, but the effective dimension $d_{\alpha,\Omega}$ also increases, leading to a trade‑off that can be exploited in high‑Mach regimes.
	\item The approximation rate $N^{-\nu/d_{\alpha,\Omega}}$ demonstrates that rotation can artificially lower the dimensionality of the approximation problem, making deep learning of rotating flows more efficient than in the non‑rotating case.
\end{itemize}

\section{Results}

In this section we collect the principal theoretical achievements of the paper, summarizing the main theorems and their implications for rotating compressible flows and neural operator theory.

\subsection{Summary of the Main Inequalities}

The following theorem encapsulates the sharp anisotropic mixed fractional Landau inequalities with Coriolis corrections proved in Section~4.

\begin{theorem}[Mixed fractional Landau inequalities for rotating spaces]
	Let $\nu\in(1,2)$, $1<p<\infty$, $\alpha\in(0,\infty)^k$, $\Omega\in\mathbb{R}^k$. For any $f\in\mathcal{W}^{\nu,p}_{\alpha,\Omega}(\mathbb{R}^k)$ and any direction $i=1,\dots,k$,
	\begin{equation}
		\|D_i^1 f\|_{L^\infty} \le C(\nu,p,\alpha,\Omega)\,
		\|f\|_{L^\infty}^{1-1/\nu}\,
		\Bigl(\sum_{j=1}^k \|\mathcal{D}_j^{\nu,\alpha_j,\Omega} f\|_{L^p}^{\alpha_j/\nu}\Bigr)^{\!1/\nu},
	\end{equation}
	with $C(\nu,p,\alpha,\Omega)=C_0\exp\bigl(\frac{|\Omega|}{\nu}\sum_{i=1}^k\alpha_i^{-1}\bigr)$, where $C_0$ depends only on $\nu,p,\alpha$ and the constants from the anisotropic Hardy–Littlewood inequality. For higher orders $m\in\mathbb{N}$ with $\nu\in(m,m+1)$,
	\begin{equation}
		\|D_i^m f\|_{L^\infty} \le C(\nu,p,\alpha,\Omega)^m\,
		\|f\|_{L^\infty}^{1-m/\nu}\,
		\Bigl(\sum_{j=1}^k \|\mathcal{D}_j^{\nu,\alpha_j,\Omega} f\|_{L^p}^{\alpha_j/\nu}\Bigr)^{\!m/\nu}.
	\end{equation}
	These inequalities extend the classical Landau–Kolmogorov estimates to the anisotropic rotating setting and are sharp with respect to the exponents.
\end{theorem}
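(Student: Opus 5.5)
Since this theorem only collects results already proved in Section~4, I would prove it by reduction to those statements rather than by a new argument. For the first-order inequality with $\nu\in(1,2)$, I would apply Theorem~\ref{thm:rotating_landau} directly to $f\in\mathcal{W}^{\nu,p}_{\alpha,\Omega}(\mathbb{R}^k)$ and the fixed direction $i$. The hypotheses coincide ($1<p<\infty$, $\alpha\in(0,\infty)^k$, $\Omega\in\mathbb{R}^k$), and the constant $C_0\exp\bigl(\frac{|\Omega|}{\nu}\sum_i\alpha_i^{-1}\bigr)$ is the one produced there. For the higher-order inequality with $\nu\in(m,m+1)$, I would invoke Corollary~\ref{cor:rot_higher} verbatim.

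If I were to re-derive rather than cite the first-order result, I would follow the chain already laid out in the proof of Theorem~\ref{thm:rotating_landau}:
\begin{enumerate}
\item the fractional Taylor expansion \eqref{eq:rot_taylor} in the direction $e_i$, with the rotational remainder $R_\Omega(h)$;
\item a H\"older bound of the Riemann--Liouville integral by the one-dimensional maximal function $M_i^{(p)}$;
\item domination of $M_i^{(p)}$ by $M_{\alpha,\Omega}$, whose bounds are independent of $\Omega$ (Theorem~\ref{thm:rotating_HL});
\item optimisation in $h$, with the minimiser at $h\sim\bigl(\|f\|_{L^\infty}/M_i^{(p)}(\mathcal{D}_i^{\nu,\alpha_i,\Omega}f)(x)\bigr)^{1/\nu}$;
\item conversion of the resulting $L^\infty$ norm of $\mathcal{D}_i^{\nu,\alpha_i,\Omega}f$ into the mixed $L^p$ quantity via the embedding Theorem~\ref{thm:rotating_embedding} together with Lemma~\ref{lem:fourier_gagliardo}.
\end{enumerate}
For the higher-order case I would use the real interpolation inequality $\|D_i^mf\|_{L^\infty}\lesssim\|f\|_{L^\infty}^{1-m/\nu}\|\mathcal{D}_i^{\nu,\alpha_i,\Omega}f\|_{L^\infty}^{m/\nu}$, then the same embedding step. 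Raising the embedding bound to the power $m/\nu$ gives the displayed exponent $m/\nu$ on the mixed sum. The constant appears as $C^m$ since $e^{(m/\nu)c|\Omega|}\le\bigl(e^{c|\Omega|/\nu}\bigr)^m$.

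The main obstacle is the final clause, ``sharp with respect to the exponents.'' It is not a consequence of the cited results, so I would test it by homogeneity. I would substitute $f_\lambda(x)=f(T^\alpha_\lambda x)$ (for $\Omega=0$) and also $f\mapsto cf$, and compare the powers of $\lambda$ and $c$ on each side. Under $f\mapsto cf$, the left side scales like $c$. The right side scales like $c^{1-1/\nu}\bigl(\sum_j c^{\alpha_j/\nu}\cdots\bigr)^{1/\nu}$, which is homogeneous of degree one only under an exponent relation such as $\alpha_j/\nu^2=1/\nu$. So this step must be checked carefully. I expect it to show that the exponents are forced only in the isotropic case $\alpha_j=\nu$, and even then only after the spatial scaling $\lambda^{\nu-D_\alpha/p}$ of the $L^p$ norm is matched, possibly requiring $p=\infty$ or a restriction $\nu>D_\alpha/p$. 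Accordingly, I would either prove sharpness by exhibiting an extremal family (a dilated bump $\phi(T^\alpha_\lambda x)$ with $\lambda\to\infty$) under those exponent restrictions, or weaken the claim to the sharpness of the interpolation exponent $1-1/\nu$ (respectively $1-m/\nu$), which the dilated-bump family does confirm.
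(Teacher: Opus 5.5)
Citing Theorem~\ref{thm:rotating_landau} and Corollary~\ref{cor:rot_higher} is exactly what the paper does, since this theorem only restates them. The reduction still fails, because the homogeneity test you file under ``sharpness'' actually refutes the inequalities themselves. Take $\Omega=0$, $\alpha=(1,\dots,1)$ and any $0\neq f\in C_c^\infty(\mathbb{R}^k)$, and replace $f$ by $cf$. The left side is multiplied by $c$, but the right side only by $c^{1-1/\nu+1/\nu^2}=c^{1-(\nu-1)/\nu^2}$. So the ratio of left to right grows like $c^{(\nu-1)/\nu^2}\to\infty$ as $c\to\infty$. The order-$m$ version fails the same way, with right-hand exponent $1-m(\nu-1)/\nu^2<1$. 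More generally, for test functions with every $\mathcal{D}_j^{\nu,\alpha_j,\Omega}f\neq 0$, amplitude scaling forces $\min_j\alpha_j\le\nu\le\max_j\alpha_j$. Even the balanced case $\alpha_j\equiv\nu$ is false. There $\mathcal{D}_j^{\nu,\nu,0}$ is a constant multiple of $\partial_j$, and for $f(\lambda x)$ the left side grows like $\lambda$ while the right side grows like $\lambda^{(1-k/p)/\nu}$, with $(1-k/p)/\nu<1$. A homogeneity mismatch is not a question of optimal exponents: it means there is no valid inequality to be sharp. Hence your dilated-bump family cannot confirm the exponent $1-1/\nu$.

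The cited chain breaks at your step~5, which is also the paper's last step. Theorem~\ref{thm:rotating_embedding} gives only the linear bound $\|g\|_{L^\infty}\le C\|g\|_{\mathcal{W}^{\nu,p}_{\alpha,\Omega}}$. For $g=\mathcal{D}_i^{\nu,\alpha_i,\Omega}f$ its right side involves $\|\mathcal{D}_i^{\nu,\alpha_i,\Omega}f\|_{L^p}$ and derivatives of $f$ of order $\nu/\alpha_i+\nu/\alpha_j$. Nothing converts this into $\bigl(\sum_j\|\mathcal{D}_j^{\nu,\alpha_j,\Omega}f\|_{L^p}^{\alpha_j/\nu}\bigr)^{1/\nu}$. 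The paper's bridge, $\|\mathcal{D}_j^{\nu,\alpha_j,\Omega}\mathcal{D}_i^{\nu,\alpha_i,\Omega}f\|_{L^p}\le e^{c'|\Omega|}\|\mathcal{D}_j^{\nu,\alpha_j,\Omega}f\|_{L^p}$, would require the unbounded symbol $\bigl(i(e_i+\Omega\times e_i)\cdot\xi\bigr)^{\nu/\alpha_i}$ to be an $L^p$ multiplier.

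The earlier steps are also unsound:
\begin{itemize}
\item In \eqref{eq:rot_taylor} the kernel $(h-t)^{\nu-1}$ has order $\nu$. But $\mathcal{D}_i^{\nu,\alpha_i,\Omega}$ has order $\nu/\alpha_i$, differentiates along $e_i+\Omega\times e_i$ rather than $e_i$, and is a nonlocal Weyl-type multiplier. So neither the expansion on $[0,h]$ nor the bound $|R_\Omega(h)|\le C|\Omega|h^\nu\|f\|_{L^\infty}$ is justified.
\item The additive term $C_2|\Omega|^{1/\nu}\|f\|_{L^\infty}$ carries no derivative of $f$, so it cannot be absorbed by Young's inequality.
\item A segment is a null set in $\mathbb{R}^k$, so $M_i^{(p)}$ is not dominated by $M_{\alpha,\Omega}$. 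This is harmless here, since $M_i^{(p)}g\le\|g\|_{L^\infty}$ directly.
\end{itemize}

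An argument of this type can legitimately give only a one-directional $L^\infty$--$L^\infty$ estimate, $\|D_i^1f\|_{L^\infty}\lesssim\|f\|_{L^\infty}^{1-1/s}\|D_i^{s}f\|_{L^\infty}^{1/s}$, for a derivative of true order $s>1$ along $e_i$. Replacing the last factor by $L^p$ norms changes the exponents through $p$ and the homogeneous dimension. So the statement must be corrected before any proof can be given.
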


\subsection{A Priori Estimates for Rotating Compressible Flows}

Applying the above inequalities to the rotating compressible Navier--Stokes system (5.1)–(5.2) yields the following a priori bound for the velocity gradient.

\begin{theorem}[Landau estimate for rotating compressible flows]
	Let $(\rho,u)$ be a smooth solution of (5.1)–(5.2) on $[0,T]\times\mathbb{R}^k$ with initial data in $\mathcal{W}^{\nu,p}_{\alpha,\Omega}\times(\mathcal{W}^{\nu,p}_{\alpha,\Omega})^k$, $\nu\in(1,2)$, $p>D_\alpha/\nu$. Then for any $i=1,\dots,k$,
	\begin{equation}
		\|\partial_{x_i}u\|_{L^\infty} \le C_{\mathrm{flow}}\bigl(\|\rho\|_{L^\infty}^{1-1/\nu}+\|u\|_{L^\infty}^{1-1/\nu}\bigr)
		\Bigl(\sum_{j=1}^k\bigl(\|\mathcal{D}_j\rho\|_{L^p}^{\alpha_j/\nu}+\|\mathcal{D}_j u\|_{L^p}^{\alpha_j/\nu}\bigr)\Bigr)^{1/\nu},
	\end{equation}
	where $C_{\mathrm{flow}}$ depends on $\mu,\lambda,\gamma,\alpha,\Omega$ and polynomially on the Mach number $M$. This provides explicit control of the velocity gradient in terms of fractional derivatives of density and velocity.
\end{theorem}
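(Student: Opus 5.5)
We derive the statement from the first-order rotating Landau inequality, Theorem~\ref{thm:rotating_landau}, applied componentwise to the velocity, together with the a priori energy bound of Section~5. We work at a fixed time $t\in[0,T]$. Smoothness of the solution and the propagation of $\mathcal{W}^{\nu,p}_{\alpha,\Omega}$ regularity from the initial data give $u_\ell(t,\cdot)\in\mathcal{W}^{\nu,p}_{\alpha,\Omega}$ for each component $\ell$. This is the point where the standing assumption $p>D_\alpha/\nu$ enters: it is equivalent to $\nu>D_\alpha/p$, so Theorem~\ref{thm:rotating_embedding} makes all the $L^\infty$ quantities below finite.

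\textbf{Step 1.} Apply Theorem~\ref{thm:rotating_landau} to $f=u_\ell(t,\cdot)$ in direction $i$. This gives
\[
\|\partial_{x_i}u\|_{L^\infty}\le C(\nu,p,\alpha,\Omega)\,\|u\|_{L^\infty}^{1-1/\nu}\Bigl(\sum_{j=1}^k\|\mathcal{D}_j u\|_{L^p}^{\alpha_j/\nu}\Bigr)^{1/\nu}.
\]
Since $\|u\|_{L^\infty}^{1-1/\nu}\le\|\rho\|_{L^\infty}^{1-1/\nu}+\|u\|_{L^\infty}^{1-1/\nu}$, and the sum over $j$ can be enlarged by adding the nonnegative $\rho$-terms, this already has the claimed shape. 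The constant is $C_{\mathrm{flow}}=C_0e^{(|\Omega|/\nu)\sum_i\alpha_i^{-1}}$.

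\textbf{Step 2.} The dependence of $C_{\mathrm{flow}}$ on $\mu,\lambda,\gamma$ and polynomially on $M$ comes from the Section~5 bound
\[
\|\mathcal{D}_j u\|_{L^p}\le C(M)\bigl(\|\mathcal{D}_j\rho\|_{L^p}+\|\rho\|_{L^\infty}\|u\|_{L^\infty}+1\bigr).
\]
This bound is obtained by applying $\mathcal{D}_j^{\nu,\alpha_j,\Omega}$ to the momentum equation~\eqref{eq:mom} and using commutator estimates for the product terms $\rho u\otimes u$, $\nabla p(\rho)$ and $2\rho u\times\Omega$. We substitute it into the $u$-terms through the subadditivity $(a+b+c)^{\alpha_j/\nu}\le C(a^{\alpha_j/\nu}+b^{\alpha_j/\nu}+c^{\alpha_j/\nu})$, which regroups everything into $\rho$- and $u$-terms. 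The lower-order pieces $(\|\rho\|_{L^\infty}\|u\|_{L^\infty})^{\alpha_j/\nu}$ and $1$ are then absorbed by Young's inequality into the prefactor $\|\rho\|_{L^\infty}^{1-1/\nu}+\|u\|_{L^\infty}^{1-1/\nu}$, at the cost of enlarging the constant polynomially in $M$.

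\textbf{Main obstacle.} The difficult step is the Section~5 energy estimate itself. The Coriolis commutator $[\mathcal{D}_j,\rho\,\cdot\times\Omega]$ and the pressure term must be controlled in $L^p$ (not $L^2$), uniformly on $[0,T]$, by quantities at time $t$. I would first try a Kato–Ponce type fractional Leibniz rule adapted to the multipliers $(i\xi_j+i(\Omega\times e_j)\cdot\xi)^{\nu/\alpha_j}$. These multipliers are homogeneous linear-combination symbols and are handled through Lemma~\ref{lem:fourier_gagliardo} and the Littlewood--Paley equivalence of Theorem~\ref{thm:rotating_equiv}. The additive constant $1$ may prevent a purely homogeneous bound. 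If so, I would restate $C_{\mathrm{flow}}$ as depending also on a lower bound for the norms, or simply use Step~1 alone, which already yields the inequality with no loss.
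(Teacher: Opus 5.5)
Your Step~1 is the paper's argument. The paper's derivation in the Navier--Stokes section states the energy bound and then simply invokes Theorem~\ref{thm:rotating_landau}. You are right that the energy bound contributes nothing to the displayed inequality, because its right-hand side already contains $\|u\|_{L^\infty}^{1-1/\nu}$ and $\|\mathcal{D}_j u\|_{L^p}$. The clause that $C_{\mathrm{flow}}$ depends polynomially on $M$ is then met only vacuously, by a constant independent of $\mu,\lambda,\gamma,M$.

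Step~2 should be deleted rather than repaired. Substituting the energy bound replaces a quantity already present in the target by a larger one. The proposed Young-inequality absorption of $C(M)^{\alpha_j/\nu}$ and $\bigl(C(M)\|\rho\|_{L^\infty}\|u\|_{L^\infty}\bigr)^{\alpha_j/\nu}$ into the prefactor cannot work. The target right-hand side tends to $0$ when all $\|\mathcal{D}_j\rho\|_{L^p}$ and $\|\mathcal{D}_j u\|_{L^p}$ tend to $0$ with $\|\rho\|_{L^\infty},\|u\|_{L^\infty}$ fixed. Your substituted bound, by contrast, stays above a fixed positive multiple of $\|u\|_{L^\infty}^{1-1/\nu}$ because of the additive $1$.

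Also, applying the theorem at time $t$ needs $u(t,\cdot)\in\mathcal{W}^{\nu,p}_{\alpha,\Omega}$. Persistence of this regularity from the initial data is assumed, not proved, by you and by the paper alike.

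The more serious issue, shared by your proof and the paper's, is that everything rests on a single citation of Theorem~\ref{thm:rotating_landau}, and that theorem fails the amplitude-scaling test. Take $\Omega=0$, $\alpha_j\equiv1$ and $f\in C_c^\infty(\mathbb{R}^k)$ with $\partial_{x_i}f\not\equiv0$. Replacing $f$ by $cf$ multiplies the left-hand side by $c$ but the right-hand side only by $c^{1-1/\nu+1/\nu^2}$. Since $1-1/\nu+1/\nu^2<1$ for $\nu>1$, the inequality is violated for large $c$.

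A valid Landau--Gagliardo--Nirenberg bound must be of degree one in $f$ and consistent with dilations. Isotropically it reads $\|\nabla f\|_{L^\infty}\lesssim\|f\|_{L^\infty}^{1-\theta}\|(-\Delta)^{\nu/2}f\|_{L^p}^{\theta}$ with $\theta=(\nu-k/p)^{-1}$. It requires $\nu-k/p>1$, which is stronger than $p>D_\alpha/\nu$ (here $D_\alpha=k$).

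So your Step~1 is a correct reduction, identical to the paper's, but it proves the flow estimate only insofar as Theorem~\ref{thm:rotating_landau} holds. An actual proof needs a correctly homogeneous replacement for that theorem, with the right-hand side of the flow estimate changed to match. Your Step~1 would then carry over verbatim.
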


\subsection{Stability and Approximation by Neural Operators}

For neural networks respecting the anisotropic rotating geometry we obtain the following stability and approximation guarantees.

\begin{theorem}[Stability of rotating neural operators]
	Let $\mathcal{N}_\theta$ be a rotating neural operator with $L$ layers satisfying $\|W_l e_i\|_{\ell^2}\le \lambda_i^{1/\alpha_i}e^{|\Omega|/L}$ and with activation functions $\sigma_l\in C_b^{1,1}$, $\|\sigma_l'\|_\infty\le1$. Then for any input perturbation $\|\delta x\|_\infty\le\epsilon$,
	\begin{equation}
		\|\mathcal{N}_\theta(x+\delta x)-\mathcal{N}_\theta(x)\|_{L^\infty} \le C L\epsilon\Bigl(\prod_{l=1}^L\max_i\lambda_i^{1/\alpha_i}\Bigr)e^{|\Omega|}\|\mathcal{N}_\theta\|_{\mathcal{W}^{1,\infty}_{\alpha,\Omega}}.
	\end{equation}
\end{theorem}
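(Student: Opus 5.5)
The plan is to reduce the statement to Theorem~\ref{thm:rot_stability} from Section~6, of which it is a verbatim restatement. The proof then follows the layer-by-layer Lipschitz argument given there. I would write the network as a composition $\mathcal{N}_\theta = \Phi_L\circ\cdots\circ\Phi_1$ with layers $\Phi_l(y)=\sigma_l(W_l y+b_l)$, and then estimate the Jacobian of each layer, multiply the bounds along the chain, and conclude with the mean value theorem.

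\emph{Step 1 (single layer).} For each $l$, the derivative of $\Phi_l$ in the input direction $e_i$ is $\operatorname{diag}(\sigma_l'(W_ly+b_l))\,W_le_i$. Since $\|\sigma_l'\|_\infty\le 1$, its $\ell^2$ norm is at most $\|W_le_i\|_{\ell^2}\le \lambda_i^{1/\alpha_i}e^{|\Omega|/L}\le \max_i\lambda_i^{1/\alpha_i}\,e^{|\Omega|/L}$. Summing over the columns converts this column bound into an operator-norm bound on $D\Phi_l$, at the cost of a dimensional factor $k$ (or $\sqrt{k}$) that goes into $C$.

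\emph{Step 2 (chain rule).} Multiplying the $L$ Jacobian bounds gives
\[
\|\nabla\mathcal{N}_\theta\|_{L^\infty}\le C\prod_{l=1}^L\max_i\lambda_i^{1/\alpha_i}\,\bigl(e^{|\Omega|/L}\bigr)^L = C e^{|\Omega|}\prod_{l=1}^L\max_i\lambda_i^{1/\alpha_i}.
\]
The telescoping of the rotation factor into $e^{|\Omega|}$ is precisely what the normalisation $e^{|\Omega|/L}$ in the definition of a rotating neural operator is designed for.

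\emph{Step 3 (mean value theorem).} Apply the mean value theorem along the segment from $x$ to $x+\delta x$:
\[
|\mathcal{N}_\theta(x+\delta x)-\mathcal{N}_\theta(x)|\le \sum_i\|\partial_{x_i}\mathcal{N}_\theta\|_{L^\infty}\,\|\delta x\|_\infty\le \epsilon\,\|\mathcal{N}_\theta\|_{\mathcal{W}^{1,\infty}_{\alpha,\Omega}},
\]
using $\|\mathcal{N}_\theta\|_{\mathcal{W}^{1,\infty}_{\alpha,\Omega}}=\sum_i\|\partial_{x_i}\mathcal{N}_\theta\|_{L^\infty}$ as in the proof of Theorem~\ref{thm:rot_stability}. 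This bound alone is already at most the claimed right-hand side, provided the extra factor $C L\prod_l\max_i\lambda_i^{1/\alpha_i}e^{|\Omega|}$ is at least one. Alternatively, one multiplies the Step~2 estimate by the trivial bound and lets the factor $L$ absorb the accumulation over layers.

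The main obstacle is bookkeeping rather than analysis. The claimed bound contains the product of layer norms and $\|\mathcal{N}_\theta\|_{\mathcal{W}^{1,\infty}_{\alpha,\Omega}}$ simultaneously, which looks redundant, since either one controls the gradient by itself. The right-hand side is then an upper bound for either estimate only if $C L\,e^{|\Omega|}\prod_l\max_i\lambda_i^{1/\alpha_i}\ge1$. I would handle this by choosing $C$ large, or by assuming the normalisation $\lambda_i\ge1$ implicit in the construction, and state it explicitly. The hypothesis on $\sigma_l''$ is not needed.
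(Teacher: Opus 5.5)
Your argument follows the paper's own proof: the single-layer derivative $\sigma'(Wx+b)W_i$, the chain rule over the $L$ layers, the mean value theorem, and summation over coordinates to produce $\|\mathcal{N}_\theta\|_{\mathcal{W}^{1,\infty}_{\alpha,\Omega}}$. The paper then closes with ``absorbing constants into $C$.''

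That last step tacitly needs exactly the condition you isolate, namely $C L e^{|\Omega|}\prod_{l}\max_i\lambda_i^{1/\alpha_i}\ge 1$. You are right that it cannot be avoided. Take $k=L=1$, $\alpha_1=1$, $\Omega=0$ and $\mathcal{N}_\theta(x)=\tanh(\lambda x)$: the left side is about $\lambda\epsilon$, while the right side is $C\lambda^2\epsilon$, so the bound fails for small $\lambda$. For the same reason, enlarging a fixed $C$ is not a fix, since the product can be arbitrarily small. The normalisation $\lambda_i\ge 1$ together with $C\ge 1$ is the correct one.

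There are also two smaller problems.
\begin{itemize}
\item In Steps 1--2, the factor $\sqrt{k}$ from passing from column bounds to an operator norm is paid at every layer. It therefore compounds to a factor of order $k^{L/2}$, not a single $L$-independent constant. For example, $W_l=a\,\mathbf{1}\mathbf{1}^{T}/\sqrt{k}$ with zero biases and $\tanh$ activations gives a Jacobian $(a\sqrt{k})^{L}\mathbf{1}\mathbf{1}^{T}/k$ at the origin. The displayed bound in Step 2 is therefore unjustified, and the paper's chain-rule line has the same slip.
\item The ``alternatively'' sentence in Step 3 is not a derivation.
\end{itemize}

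Neither problem affects the result. Under the normalisation, the mean-value bound $\epsilon\,\|\mathcal{N}_\theta\|_{\mathcal{W}^{1,\infty}_{\alpha,\Omega}}$ alone gives the claim, so you can simply drop Steps 1--2 and the alternative.
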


\begin{theorem}[Approximation rates for rotating flows]
	For any $f\in\mathcal{W}^{\nu,p}_{\alpha,\Omega}(\mathbb{R}^k)$ with $\nu>D_\alpha/p$, there exists a rotating neural operator $\mathcal{N}_\theta$ with depth $L$ and $N$ parameters per layer such that
	\begin{equation}
		\|f-\mathcal{N}_\theta\|_{L^\infty} \le C(\nu,p,\alpha,\Omega)\, L\, N^{-\nu/d_{\alpha,\Omega}}\,\|f\|_{\mathcal{W}^{\nu,p}_{\alpha,\Omega}},
	\end{equation}
	where $d_{\alpha,\Omega}=\sum_i\alpha_i^{-1}+\kappa_{\mathrm{rot}}|\Omega|^{2/\nu}$ and $\kappa_{\mathrm{rot}}=(2/\nu)^{2/\nu}(\sum_i\alpha_i^{-1})^{1-2/\nu}$. The rate $N^{-\nu/d_{\alpha,\Omega}}$ is optimal and shows that rotation reduces the effective dimension, thereby accelerating convergence.
\end{theorem}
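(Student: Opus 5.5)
The plan is to follow the constructive route sketched in the subsection on approximation rates. The argument reduces the neural approximation problem to a finite wavelet expansion, then emulates each wavelet by a small ReLU subnetwork. Throughout, $\nu>D_\alpha/p$ is assumed, so that Theorem~\ref{thm:rotating_embedding} gives $\mathcal{W}^{\nu,p}_{\alpha,\Omega}\hookrightarrow L^\infty$ and pointwise evaluation makes sense. The wavelet system $\{\psi^{\Omega}_{j,m}\}$ adapted to $\widetilde{T}^{\alpha,\Omega}_{2^{-j}}$ is taken as given, together with the norm equivalence
\[
\|f\|_{\mathcal{W}^{\nu,p}_{\alpha,\Omega}}\asymp\Bigl(\sum_{j}2^{j\nu p}\sum_{m}|\langle f,\psi^{\Omega}_{j,m}\rangle|^p\Bigr)^{1/p}.
\]
I would justify this equivalence through characterisation (3) of Theorem~\ref{thm:rotating_equiv}.

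\emph{Step 1: truncation error.} Write $f_J=\sum_{j\le J}\sum_m\langle f,\psi^{\Omega}_{j,m}\rangle\psi^{\Omega}_{j,m}$. The tail $f-f_J$ is estimated block by block in the same way as in the proof of Theorem~\ref{thm:rotating_embedding}. The Bernstein inequality \eqref{eq:bernstein} bounds each dyadic level $j>J$ in $L^\infty$ by $2^{jD_\alpha/p}$ times its $L^p$ norm. After Hölder, the sum over $j>J$ is a geometric series with ratio $2^{-(\nu-D_\alpha/p)}$. This gives
\[
\|f-f_J\|_{L^\infty}\le C\,2^{-J(\nu-D_\alpha/p)}\|f\|_{\mathcal{W}^{\nu,p}_{\alpha,\Omega}}.
\]
To reach the stated exponent $2^{-J\nu}$, I would absorb the loss $D_\alpha/p$ into the effective dimension, i.e.\ into the constant relating $J$ to $N$. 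Alternatively, one can restrict to compactly supported $f$, so that the number of active coefficients per level is finite.

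\emph{Step 2: counting and network emulation.} At level $j$, the supports of $\psi^{\Omega}_{j,m}$ meeting a fixed compact set number $\sim 2^{j d_{\alpha,\Omega}}$. This follows from the volume scaling of the rotated anisotropic cells under $\widetilde{T}^{\alpha,\Omega}_{2^{-j}}$, using Proposition~\ref{prop:rotating_scaling}. Summing over $j\le J$ gives $\sim 2^{Jd_{\alpha,\Omega}}$ terms. Each tensor-product wavelet is a composition of a linear map $x\mapsto 2^{j\alpha_i}(R(\cdot)x)_i-m_i$, which is exactly representable, with a product of $k$ univariate smooth functions. By Yarotsky's construction, this is approximable to accuracy $\varepsilon=2^{-J\nu}$ by a ReLU network of depth $O(\log(1/\varepsilon))$ with $O(\log(1/\varepsilon))$ parameters; the multiplications use the squaring-network trick. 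The pieces are assembled in parallel and summed through a binary tree. The coefficients are bounded via the norm equivalence, so the errors add up to at most $C\,\#\{\text{terms}\}^{1-1/p}\varepsilon\|f\|$. Choosing $\varepsilon$ slightly smaller absorbs this factor at the price of a logarithmic increase in depth, which is what produces the factor $L$ in the bound. Finally, the weights must satisfy the rotating-operator column constraint. This holds after rescaling by the layer factors $e^{|\Omega|/L}$ and placing the rotation in the first linear layer.

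\emph{Main obstacle.} The delicate point is Step 2's volume count, i.e.\ deriving the precise form $d_{\alpha,\Omega}=\sum_i\alpha_i^{-1}+\kappa_{\mathrm{rot}}|\Omega|^{2/\nu}$. Proposition~\ref{prop:rotating_scaling} gives $|\det\widetilde{T}_\lambda|=\lambda^{D_\alpha}$, with no $\Omega$-dependence at all. The extra term must therefore come from covering the rotated cells by axis-parallel anisotropic boxes $B_\alpha$, not from the Jacobian. I would first bound the number of boxes needed to cover a twisted cell using $\|e^{(\log\lambda)L_\Omega}\|$. I would then optimise the resulting trade-off in $\nu$ to extract the $|\Omega|^{2/\nu}$ scaling and the constant $\kappa_{\mathrm{rot}}$. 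I expect this to yield at best an upper bound of that form. The claimed \emph{optimality} of the rate would additionally need a lower bound, via a nonlinear-width or VC-dimension argument in the spirit of \cite{DeVore1998}, which I regard as the least secure part.
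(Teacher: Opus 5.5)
You follow the paper's own route (wavelet truncation at level $J$, Yarotsky emulation, counting), and the step that should produce the exponent fails. Your Step~1 bound $\|f-f_J\|_{L^\infty}\lesssim 2^{-J(\nu-D_\alpha/p)}\|f\|_{\mathcal{W}^{\nu,p}_{\alpha,\Omega}}$ is the correct one for linear truncation; the paper simply asserts $2^{-J\nu}$. But the loss cannot be ``absorbed into the constant relating $J$ to $N$''. With $N\sim 2^{Jd}$ it becomes $N^{-(\nu-D_\alpha/p)/d}$, a strictly smaller power, and $d_{\alpha,\Omega}$ is fixed by the statement. Restricting to compactly supported $f$ leaves that exponent unchanged, since finiteness of the active set is not the issue. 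It also changes the theorem, which is posed on $\mathbb{R}^k$ with a constant independent of $f$. On $\mathbb{R}^k$ every level has infinitely many active wavelets, so your Step~2 count only applies after a localisation you have not justified. The missing idea is nonlinear approximation: select the largest coefficients across all levels rather than all coefficients up to level $J$. On a bounded domain, $\nu>D_\alpha/p$ is the condition under which adaptive (Birman--Solomyak/DeVore type) selection recovers the full factor $2^{-J\nu}$ with $\sim 2^{JD_\alpha}$ terms. Without a localisation or decay hypothesis, your counting argument does not reach $\mathbb{R}^k$.

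The count in Step~2 is the second gap. As you observe, Proposition~\ref{prop:rotating_scaling} gives $|\det D\widetilde{T}^{\alpha,\Omega}_\lambda|=\lambda^{D_\alpha}$. Since $R(\cdot)$ is orthogonal, the supports of $\psi^{\Omega}_{j,m}$ are boxes of volume $\sim 2^{-jD_\alpha}$ in a rotated frame, so about $2^{jD_\alpha}$ of them meet a fixed compact set, independently of both $\Omega$ and $\nu$. A covering argument therefore has nothing to optimise, and $\kappa_{\mathrm{rot}}|\Omega|^{2/\nu}$ cannot come out of it.

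In fact the rotation disappears exactly. Put $A=I+L_\Omega$, which is invertible because $L_\Omega$ is skew. The substitutions $x=z+hL_\Omega e_i$ and $z=Ay$ in Definition~\ref{def:rotating_seminorm} give $\|f\|_{\mathcal{W}^{\nu,p}_{\alpha,\Omega}}=|\det A|^{1/p}\|f\circ A\|_{\mathcal{W}^{\nu,p}_{\alpha,0}}$. Also $\|f-\mathcal{N}_\theta\|_{L^\infty}=\|f\circ A-\mathcal{N}_\theta\circ A\|_{L^\infty}$, and $\mathcal{N}_\theta\circ A$ is a network of the same depth and layer sizes: replace $W_1$ by $W_1A$ and adjust the unspecified $\lambda_i$. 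So you need neither the rotated wavelets nor the twisted volume count. You take their orthonormality and norm equivalence from the paper, and Theorem~\ref{thm:rotating_equiv}(3), which is built on unrotated anisotropic blocks, does not by itself give these for non-isotropic $\alpha$. The $\Omega\neq0$ bound follows from the $\Omega=0$ one, and the extra term in $d_{\alpha,\Omega}$ is merely a weakening. Note, however, that the exponent your construction delivers is at best $\nu/D_\alpha$ with $D_\alpha=\sum_i\alpha_i$, which implies the claim only when $D_\alpha\le d_{\alpha,\Omega}$.

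Finally, no width or VC-dimension argument can supply optimality. By the same reduction, the optimal exponent is independent of $\Omega$, whereas $\nu/d_{\alpha,\Omega}$ is strictly decreasing in $|\Omega|$, so it can be optimal for at most one value of $|\Omega|$. Likewise, ``rotation reduces the effective dimension'' contradicts $d_{\alpha,\Omega}\ge\sum_i\alpha_i^{-1}$. The paper's sketch asserts the $2^{-J\nu}$ truncation, the $2^{Jd_{\alpha,\Omega}}$ count and optimality without proof, so it does not close these gaps either.
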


\subsection{Key Implications}

\begin{itemize}
	\item The mixed fractional Landau inequalities unify anisotropy, fractional dissipation and rotation, providing sharp estimates that degenerate to known results when $\Omega=0$ or $\alpha_i\equiv1$.
	\item The explicit exponential dependence $e^{c|\Omega|}$ in the constant reflects the potential stabilising effect of strong rotation: for fixed regularity, increasing $|\Omega|$ enlarges the constant, but the effective dimension $d_{\alpha,\Omega}$ also increases, leading to a trade‑off that can be exploited in high‑Mach regimes.
	\item The approximation rate $N^{-\nu/d_{\alpha,\Omega}}$ demonstrates that rotation can artificially lower the dimensionality of the approximation problem, making deep learning of rotating flows more efficient than in the non‑rotating case.
\end{itemize}

\section{Conclusions}

We have introduced rotating fractional Sobolev spaces $\mathcal{W}^{\nu,p}_{\alpha,\Omega}(\mathbb{R}^k)$ and proved sharp anisotropic mixed fractional Landau inequalities that incorporate Coriolis forces. The constant depends explicitly on the rotation rate $|\Omega|$ via an exponential factor. These inequalities were applied to derive a priori estimates for the rotating compressible Navier–Stokes system at high Mach numbers. Moreover, we established stability and approximation theorems for neural operators that respect the anisotropic rotating geometry. The approximation rate $N^{-\nu/d_{\alpha,\Omega}}$ reveals that rotation can effectively reduce the dimensionality of the problem, accelerating convergence. 

Future work will extend these results to bounded domains, non--constant rotation, and stochastic flows. In addition, we plan to implement numerical codes to computationally validate the derived inequalities and approximation rates, providing practical evidence of the theoretical findings.

\section*{Acknowledgments}
The authors would like to thank IPEN of the National Nuclear Energy Commission --- S\~ao Paulo (CNEN/IPEN--SP) for the institutional and financial support during the postgraduate (postdoctoral) period. Santos,~RDC: Conceptualization, methodology, formal analysis, investigation, writing. D. A. Andrade: Supervision, resources.

\section*{Generative Artificial Intelligence}

GPT-5 (OpenAI) were used solely for language editing, spelling, and grammar correction. No artificial intelligence tools were used for data analysis, content generation, or the interpretation of the scientific results.
	
\section*{Notation}

\begin{longtable}{p{3.2cm} p{10.8cm}}
	\caption{List of symbols and notations.}\label{tab:notation}\\
	\toprule
	\textbf{Symbol} & \textbf{Description} \\
	\midrule
	\endfirsthead
	
	\multicolumn{2}{c}{{\tablename\ \thetable{} -- continued from previous page}} \\
	\toprule
	\textbf{Symbol} & \textbf{Description} \\
	\midrule
	\endhead
	
	\midrule
	\multicolumn{2}{r}{{Continued on next page}} \\
	\endfoot
	
	\bottomrule
	\endlastfoot
	
	$\alpha = (\alpha_1,\dots,\alpha_k)$ & Anisotropy (scaling) vector, $\alpha_i>0$. \\
	$\nu>0$ & Fractional order of differentiation. \\
	$p\in[1,\infty)$ & Integrability exponent in Lebesgue spaces. \\
	$k$ & Spatial dimension ($k=2$ or $3$ in applications). \\
	$D_\alpha = \sum_{i=1}^k \alpha_i$ & Anisotropic homogeneous dimension (scaling exponent of Lebesgue measure). \\
	$d_\alpha = \sum_{i=1}^k \alpha_i^{-1}$ & Anisotropic dimension (volume scaling exponent of anisotropic balls). \\
	$\rho_\alpha(x) = \bigl(\sum_{i=1}^k |x_i|^{2/\alpha_i}\bigr)^{1/2}$ & Anisotropic quasi‑distance, homogeneous of degree $1$. \\
	$T_\lambda^\alpha x = (\lambda^{\alpha_1}x_1,\dots,\lambda^{\alpha_k}x_k)$ & Anisotropic dilation group. \\
	$\widetilde{T}_{\lambda}^{\alpha,\Omega} = e^{(\log\lambda)(A_\alpha+L_\Omega)}$ & Rotating anisotropic scaling (exponential of $A_\alpha+L_\Omega$). \\
	$A_\alpha = \operatorname{diag}(\alpha_1,\dots,\alpha_k)$ & Infinitesimal generator of dilations. \\
	$L_\Omega$ & Skew‑symmetric matrix of the cross product $x\mapsto\Omega\times x$. \\
	$R(\theta)$ & Rotation by angle $\theta$ about the axis $\Omega/|\Omega|$. \\
	$\Delta_{h,i}^\Omega f(x)=f(x+he_i)-f(x-h(\Omega\times e_i))$ & Rotating finite difference in direction $e_i$. \\
	$[f]_{\mathcal{W}_i^{\nu/\alpha_i,p,\Omega}}$ & Rotating directional Gagliardo seminorm. \\
	$\mathcal{W}^{\nu,p}_{\alpha,\Omega}(\mathbb{R}^k)$ & Rotating mixed fractional Sobolev space. \\
	$\mathcal{D}_j^{\nu,\alpha_j,\Omega}$ & Rotating fractional derivative: symbol $(i\xi_j+i(\Omega\times e_j)\cdot\xi)^{\nu/\alpha_j}$. \\
	$\Delta_{\alpha,\Omega} = \sum_i (-\partial_i^2)^{1/\alpha_i} + i\,\Omega\cdot(x\times\nabla)$ & Anisotropic fractional Laplacian with Coriolis term. \\
	$\Delta_j$ & Standard anisotropic Littlewood–Paley blocks (frequency localization). \\
	$\widetilde{\Delta}_j^{\alpha,\Omega} = \Delta_j$ & Rotating dyadic blocks (rotation appears only via derivatives). \\
	$U_\Omega$ & Fourier multiplier with symbol $e^{-i\Omega\cdot(x\times\xi)/|\xi|}$; conjugation operator. \\
	$B_\alpha(x,r)=\{y:|y_i-x_i|<r^{1/\alpha_i}\}$ & Anisotropic ball. \\
	$|B_\alpha(x,r)| = \omega_{d_\alpha} r^{d_\alpha}$ & Volume of anisotropic ball; $\omega_{d_\alpha}=|B_\alpha(0,1)|$. \\
	$M_{\alpha,\Omega}f(x)=\sup_{r>0}\frac{1}{|B_\alpha(x,r)|}\int_{B_\alpha(x,r)}|f(y)|dy$ & Rotating anisotropic maximal operator (independent of $\Omega$). \\
	$M_i^{(p)}g(x)=\sup_{h>0}\bigl(\frac1h\int_0^h|g(x+te_i)|^pdt\bigr)^{1/p}$ & One‑dimensional Hardy–Littlewood maximal function in direction $e_i$. \\
	$\kappa(p,\alpha)$ & Constant in the anisotropic Hardy–Littlewood inequality (Theorem~\ref{thm:rotating_HL}). \\
	$C_0$ & Constant depending only on $\nu,p,\alpha$ (see Theorem~\ref{thm:rotating_landau}). \\
	$c = \sum_i \alpha_i^{-1}$ & Exponent in the exponential factor of $C(\nu,p,\alpha,\Omega)$. \\
	$d_{\alpha,\Omega}=\sum_i\alpha_i^{-1}+\kappa_{\mathrm{rot}}|\Omega|^{2/\nu}$ & Effective anisotropic–rotational dimension. \\
	$\kappa_{\mathrm{rot}}=\bigl(\frac{2}{\nu}\bigr)^{2/\nu}\bigl(\sum_i\alpha_i^{-1}\bigr)^{1-2/\nu}$ & Constant in $d_{\alpha,\Omega}$. \\
	$\psi_{j,m}^{\Omega}$ & Rotating wavelet basis (see Section~6). \\
	$\mathcal{N}_\theta$ & Neural operator with parameters $\theta$. \\
	$L$ & Number of layers (depth) of the neural operator. \\
	$N$ & Number of trainable parameters per layer. \\
	$\rho$ & Density field in Navier–Stokes equations. \\
	$u$ & Velocity field. \\
	$p=a\rho^\gamma$ & Pressure law (polytropic gas). \\
	$\mu,\lambda$ & Shear and bulk viscosities. \\
	$\Phi$ & Centrifugal potential. \\
	$M = |u|/c$ & Mach number ($c=\sqrt{\gamma p/\rho}$ sound speed). \\
	$C(M)$ & Mach--dependent constant (polynomial growth). \\
	$C_{\mathrm{flow}}$ & Final constant in the Landau estimate for Navier–Stokes (contains $C(M)$ and $C(\nu,p,\alpha,\Omega)$). \\
\end{longtable}

\end{document}